\theoremstyle{definition}
\newtheorem{theorem}{Theorem}[section]
\newtheorem{proposition}[theorem]{Proposition}
\newtheorem{corollary}[theorem]{Corollary}
\newtheorem{definition}[theorem]{Definition}
\newtheorem{example}[theorem]{Example}
\newtheorem{remark}[theorem]{Remark}
\definecolor{blue-url}{RGB}{0,0,100}
\definecolor{red-url}{RGB}{100,0,0}
\definecolor{green-url}{RGB}{0,100,0}
\definecolor{light-yellow}{RGB}{255,255,128}
\definecolor{light-blue}{RGB}{193,255,255}
\definecolor{light-red}{RGB}{239,83,80}
\renewcommand{\emptyset}{\varnothing}
\renewcommand{\setminus}{\smallsetminus}
\renewcommand{\,}{\kern 0.1em}
\providecommand\llb{\llbracket}
\providecommand\rrb{\rrbracket}
\providecommand\sqeq{\sqsubseteq}
\providecommand\sqneq{\sqsubset}
\providecommand{\RR}{\mathbin{R}}
\newcommand{\evid}[1]{\textsf{#1}}
\newline\vspace{\abovedisplayskip}\hbox to \textwidth\bgroup\hss$\displaystyle}
\egroup\vspace{\belowdisplayskip}}
\DeclareFontFamily{OMX}{MnSymbolE}{}
\DeclareSymbolFont{MnLargeSymbols}{OMX}{MnSymbolE}{m}{n}
\DeclareFontShape{OMX}{MnSymbolE}{m}{n}{
	<-6>  MnSymbolE5
	<6-7>  MnSymbolE6
	<7-8>  MnSymbolE7
	<8-9>  MnSymbolE8
	<9-10> MnSymbolE9
	<10-12> MnSymbolE10
	<12->   MnSymbolE12
}{}
\DeclareFontShape{OMX}{MnSymbolE}{b}{n}{
	<-6>  MnSymbolE-Bold5
	<6-7>  MnSymbolE-Bold6
	<7-8>  MnSymbolE-Bold7
	<8-9>  MnSymbolE-Bold8
	<9-10> MnSymbolE-Bold9
	<10-12> MnSymbolE-Bold10
	<12->   MnSymbolE-Bold12
}{}
\let\llangle\@undefined
\let\rrangle\@undefined
\DeclareMathDelimiter{\llangle}{\mathopen}%
{MnLargeSymbols}{'164}{MnLargeSymbols}{'164}
\DeclareMathDelimiter{\rrangle}{\mathclose}%
{MnLargeSymbols}{'171}{MnLargeSymbols}{'171}
\begin{document}

\title{On the finiteness of certain factorization invariants}
\author{Laura Cossu}
\address{(L.~Cossu) Department of Mathematics and Scientific Computing, University of Graz | Heinrichstrasse 36/III, 8010 Graz, Austria}
\email{laura.cossu@uni-graz.at}
\urladdr{https://sites.google.com/view/laura-cossu}
\author{Salvatore Tringali}
\address{(S.~Tringali) School of Mathematical Sciences,
Hebei Normal University | Shijiazhuang, Hebei province, 050024 China}
\email{salvo.tringali@gmail.com}
\urladdr{http://imsc.uni-graz.at/tringali}
\subjclass[2020]{Primary 20M13, 08A50, 20M05, 13A05. Secondary 20M14}

\keywords{Atoms; elasticity; [minimal] factorizations; irreducibles; monoids; preorders; sets of lengths.}

\begin{abstract}
\noindent{}Let $H$ be a monoid, $\mathscr F(X)$ be the free monoid on a set $X$, and $\pi_H$ be the unique extension of the identity map on $H$ to a monoid homomorphism $\mathscr F(H) \to \allowbreak H$. Given $A \subseteq H$, an $A$-word $\mathfrak z$ (i.e., an element of $\mathscr F(A)$) is minimal if $\pi_H(\mathfrak z) \ne \allowbreak \pi_H(\mathfrak z')$ for every permutation $\mathfrak z'$ of a proper subword of $\mathfrak z$. The minimal $A$-elasticity of $H$ is then the supremum of all rational numbers $m/n$ with $m, n \in \allowbreak \mathbb N^+$ such that there exist minimal $A$-words $\mathfrak a$ and $\mathfrak b$ of length $m$ and $n$, resp., with $\pi_H(\mathfrak a) = \pi_H(\mathfrak b)$.

Among other things, we show that if $H$ is commutative  and $A$ is finite, then the minimal $A$-elasticity of $H$ is finite. This yields a non-trivial generalization of the finiteness part of a classical the\-o\-rem of Anderson et al.~from the case where $H$ is cancellative, commutative, and finitely generated (f.g.)~modulo units and $A$ is the set $\mathscr A(H)$ of its atoms. We also check that commutativity is somewhat essential here, by proving the existence of an atomic, cancellative, f.g.~monoid with trivial group of units whose minimal $\mathscr A(H)$-elasticity is infinite.
\end{abstract}

\maketitle \thispagestyle{empty}

\section{Introduction}
\label{sec:intro}

Let $H$ be a (multiplicatively written) monoid, e.g., the multiplicative monoid of a (unital, associative) ring. 
An \evid{irreducible} of $H$ is an element $a \in H$ that is neither a divisor of the identity $1_H$ nor a product of two other elements $x, y \in H$ each of which is neither a divisor of $1_H$ nor of $a$. An \evid{atom} of $H$ is, on the other hand, a non-unit that cannot be expressed as the product of two non-units. The existence itself of an atom implies that any divisor of $1_H$ is a unit \cite[Lemma 2.2]{Fa-Tr18}; hence, every atom is an irreducible, and the converse is true when the monoid is, e.g., commutative and cancellative (see Sect.~\ref{sec:closings}).

We define the (\evid{classical}) \evid{elasticity} of $H$ as the supremum (with respect to the standard ordering of the non-negative real numbers) of the set of rational numbers of the form $m/n$ with $m, n \in \mathbb N^+$ (the positive integers) such that $a_1 \cdots a_m = b_1 \cdots b_n$ for some irreducibles $a_1, \ldots, a_m, b_1, \ldots, b_n \in H$. Moreover, we say a monoid is \evid{atomic} if every non-unit has an \evid{atomic factorization}, that is, the element factors as a (finite) product of atoms.

Since the late 1980s, elasticity has received wide attention in the literature (see Anderson's survey \cite{DFAnd97} for an overview of results prior to 1997, \cite[Theorems 6.2 and 7.2]{Ka16} for some of the strongest finiteness criteria so far available in the cancellative commutative setting, and \cite{Ba-Co16, Ch-Sm06, Gry22, Tr19a, Zh19(a), Zh19(b)} for a non-exhaustive list of recent con\-tri\-bu\-tions). Introduced by Valenza \cite{Va90} in his study of factorization in number rings, the notion was made popular by Zaks \cite{Za76} who used it as a measure of the deviation of an atomic monoid from the condition of half-factoriality (a monoid is \evid{half-factorial} if it is atomic and any two atomic factorizations of the same element have the same \emph{length}, i.e., the same number of factors). Most notably, it is a classical result in the arithmetic theory of monoids (and rings) that the elasticity of a cancellative commutative monoid with finitely many non-associated atoms is a rational number and hence finite. (Two elements $u, v \in H$ are \evid{associated} if $u$ divides $v$ (namely, $v \in HuH$) and $v$ divides $u$.) This was first proved by Anderson et al.~in \cite[Theorem 7]{An-An-Ch-Sm93} and is herein referred to as \emph{Anderson et al.'s theorem}. The result has been later extended to unit-cancellative commutative monoids with finitely many non-associated atoms by Fan et al.~\cite[Proposition 3.4(1)]{FGKT}, where the monoid $H$ is called \evid{unit-cancellative} if $yx \ne x \ne xy$ for all $x, y \in H$ such that $y$ is a non-unit (obviously, a cancellative monoid is unit-cancellative).

In this article, we adopt a new point of view set forth in \cite{An-Tr18, Co-Tr-22(a)} and obtain a non-trivial generalization of the \emph{finiteness} part of Fan et al.'s result (and hence of Anderson et al.'s theorem) to any commutative monoid that is finitely generated up to units (Corollaries \ref{cor:minimal-elasticity-of-f.g.u.-comm-monoid} and \ref{cor:unit-cancellative}). The proof relies on \emph{Dickson's lemma} (see, e.g., Theorem 9.18 in \cite{Cl-Pr67}), which is the one and only aspect in common with previous results in the same vein. A critical feature of our approach is the use of \emph{minimal} fac\-tor\-i\-za\-tions to counter the blowup of factorization lengths and related invariants that is typical of a non-unit-cancellative or non-commutative setup (Examples~\ref{exa:blowup} and \ref{exa: non-comm counterexample}). The conclusion then becomes an immediate consequence of an es\-sen\-tial\-ly combinatorial theorem (Theorem \ref{thm:main}) that makes no reference to atoms, irreducibles, etc. 

The price we pay is that we cannot say anything about the rationality of the invariants we introduce along the way to generalize the classical elasticity (Sect.~\ref{sec:closings}). What we gain is that, in the very spirit of a series of recent papers by the same authors \cite{Tr20(c), Co-Tr-21(a), Co-Tr-22(a), Tr22(a)}, many of our results are no longer phrased in the language of \emph{monoids and irreducibles} (whose scope is, in a sense, too narrow) but rather in the more abstract language of \emph{monoids and preorders} (see Sect.~\ref{sec:premons} for details).
Loosely speaking, this allows us to use basically any sort of elements as ``building blocks'' in the factorization process (Proposition \ref{prop:reduction-to-irreds}).

Other than that, we prove a couple or so of finiteness results on length sets and their unions in a (commutative or non-commutative) monoid with finitely many irreducibles (Definition \ref{def:lengths&unions}, Propositions \ref{prop:elasticity} and \ref{prop:bounded word length}, and Corollary \ref{cor:unions-in-a-strongly-finite-permutable-F-space}), and we show by way of example that our main theorem (Theorem \ref{thm:main}) fails in the non-commutative setting (Sect.~\ref{sec:example}).

\vskip 0.05cm

\subsection*{Notation.} 
Through the paper, $\mathbb{N}$ is the set of non-negative integers, and for $a,b\in \mathbb N \cup \{\infty\}$ we let $\llb a, b\rrb := \allowbreak \{x\in \mathbb{N} \colon a\leq x\leq b\}$ be the \evid{discrete interval} from $a$ to $b$. 

A \evid{preorder} on a set $S$ is a reflexive and transitive binary relation on $S$. Given a preorder $\preceq$ on $S$, we write $x \prec y$ to mean that $x\preceq y$ and $y\not \preceq x$. (This convention also applies to the symbols $\sqeq$ and $\sqneq$, which we will likewise employ for preorders.) We say that $\preceq$ is \evid{artinian} if, for every $\preceq$-non-increasing sequence $(x_k)_{k\ge 0}$ in $S$, we have $x_k\preceq x_{k+1}$ for all but finitely many $k \in \mathbb N$.

We refer to \cite{Ho95} for generalities on monoids. In particular, we denote by $\mathscr F(X)$ the \evid{free monoid} on a set $X$ and refer to the elements of $\mathscr F(X)$ as \evid{$X$-words}. We use the symbols $\ast_X$ and $\varepsilon_X$, resp., for the operation and the identity of $\mathscr F(X)$. We take $\|\mathfrak u\|_X$ to be the (\evid{word}) \evid{length} of an $X$-word $\mathfrak u$, and for each $i \in \llb 1, \|\mathfrak u\|_X \rrb$ we let $\mathfrak u[i]$ be the $i^\text{th}$ letter of $\mathfrak u$. An $X$-word $\mathfrak v$ is then a (\evid{scattered}) \evid{subword} of $\mathfrak u$ if there is a strictly increasing function $\sigma \colon \llb 1, \|\mathfrak v\|_X \rrb \to \llb 1, \|\mathfrak u\|_X \rrb$ such that $\mathfrak u[\sigma(i)] = \mathfrak v[i]$ for each $i \in \llb 1, \allowbreak \|\mathfrak v\|_X \rrb$. When there is no serious risk of ambiguity, we drop the subscript ``$X$'' from the above notation. 

We use $H^\times$ for the group of units (or invertible elements) of a monoid $H$ and $\langle X \rangle_H$ for the submonoid of $H$ generated by a set $X \subseteq H$. We call $H$ \evid{reduced} if its only unit is the identity $1_H$ and we write $\pi_H$ for the unique extension of the identity map on $H$ to a monoid homomorphism $\mathscr{F}(H) \to H$.

\section{Elasticity}
\label{sec:elasticity}

By the definition given in the introduction, the (classical) elasticity of a monoid $H$ is the supremum of the set of all rational numbers of the form $\|\mathfrak b\|^{-1} \|\mathfrak a\|$ as $\mathfrak a$ and $\mathfrak b$ range over the non-empty $\mathscr I(H)$-words with $\pi_H(\mathfrak a) = \pi_H(\mathfrak b)$, where $\mathscr I(H)$ is the set of irreducibles of $H$. We aim to generalize this idea.

\begin{definition}
\label{def:permutable-preorder}
Given a set $X$, we denote by $\sqeq_X$  
the binary re\-la\-tion on the free monoid $\mathscr F(X)$ defined by $\mathfrak a \sqeq_X \mathfrak b$, for some $X$-words $\mathfrak a$ and $\mathfrak b$, if and only if $\mathfrak a$ is a \evid{permuted subword} of $\mathfrak b$, i.e., there is an injective function $\sigma \colon \llb 1, \|\mathfrak a\| \rrb \to \llb 1, \|\mathfrak b\| \rrb$ such that $\mathfrak a[i] = \mathfrak b[\sigma(i)]$.
\end{definition}

Since the composition of two injections is still an injection, it is immediate that the relation $\sqeq_X$ in Definition \ref{def:permutable-preorder} is a preorder on $\mathscr F(X)$. More precisely, $\sqeq_X$ is an artinian preorder, because $\mathfrak a \sqneq_X \mathfrak b$ implies $\|\mathfrak a\| < \|\mathfrak b\|$.
This makes it natural to talk about $\sqeq_X$-minimality in $\mathscr F(X)$. Moreover, the pair $(\mathscr F(X), \allowbreak \sqeq_X)$ is a \emph{strongly positive monoid} in the sense of \cite[Definition 2.3]{Co-Tr-22(a)}, meaning that (i) $\varepsilon \sqeq_X \mathfrak a$ for every $X$-word $\mathfrak a$ and (ii) $\mathfrak a \sqeq_X \mathfrak b$ yields $\mathfrak u \ast \mathfrak a \ast \mathfrak v \sqeq_X \mathfrak u \ast \mathfrak b \ast \mathfrak v$ for all $\mathfrak u, \mathfrak v \in \mathscr F(X)$, with the latter inequality being strict whenever so is the former.

\begin{definition}\label{def:minimal A factorizations}
Given a monoid $H$ and a set $A \subseteq H$, an $A$-word $\mathfrak z$ is \evid{minimal} if $\pi_H(\mathfrak z') \ne \allowbreak \pi_H(\mathfrak z)$ whenever $\mathfrak z' \sqneq_H \mathfrak z$. The \evid{minimal $A$-elasticity} $\varrho_A^\mathsf{m}(H)$ of $H$ is then the supremum of the set of all rational numbers of the form $m/n$ with $m, n \in \mathbb N^+$ such that there exist minimal $A$-words $\mathfrak a$ and $\mathfrak b$ with $\|\mathfrak a\| = \allowbreak m$, $\|\mathfrak b\| = n$, and $\pi_H(\mathfrak a) = \pi_H(\mathfrak b)$, where it is understood that $\sup \emptyset := 0$.
\end{definition}

With these preliminaries in place, we are just ready for the main theorem of the paper. In the proof, we will make use of Dickson's lemma \cite[Theorem 9.18]{Cl-Pr67}, stating that every non-empty subset of $\mathbb N^{\times n}$ ($n \in \allowbreak \mathbb N^+$) has at least one minimal element with respect to the \evid{product order} $\leq_n$ induced on $\mathbb{N}^{\times n}$ by the standard order on $\mathbb{N}$, meaning that $\mathfrak u \leq_n \mathfrak v$, for some $\mathfrak u, \allowbreak \mathfrak v \in \mathbb N^{\times n}$, if and only if $\mathfrak u[i] \le \mathfrak v[i]$ for each $i \in \allowbreak \llb 1, n \rrb$ (here, we regard $\mathfrak u$ and $\mathfrak v$ as $\mathbb N$-words of length $n$).

\begin{theorem}\label{thm:main}
The minimal $A$-elasticity of a commutative monoid $H$ is finite for every finite $A \subseteq H$.
\end{theorem}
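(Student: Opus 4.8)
The plan is to bound the set of minimal $A$-words directly, then extract the elasticity bound. Fix a commutative monoid $H$ and a finite set $A = \{a_1,\dots,a_k\} \subseteq H$. Since $H$ is commutative, the value $\pi_H(\mathfrak z)$ of an $A$-word $\mathfrak z$ depends only on the multiset of its letters, i.e.\ only on the exponent vector $\mathbf e(\mathfrak z) = (e_1,\dots,e_k) \in \mathbb N^{\times k}$, where $e_j$ is the number of occurrences of $a_j$ in $\mathfrak z$; and $\|\mathfrak z\| = e_1 + \cdots + e_k$. First I would observe that, because $\sqeq_H$ restricted to $A$-words corresponds (via commutativity) to the product order $\leq_k$ on exponent vectors, an $A$-word $\mathfrak z$ is minimal in the sense of Definition~\ref{def:minimal A factorizations} precisely when $\mathbf e(\mathfrak z)$ is a minimal element (with respect to $\leq_k$) of the fibre $\{\mathbf v \in \mathbb N^{\times k} \colon \text{the product } a_1^{v_1}\cdots a_k^{v_k} \text{ equals } \pi_H(\mathfrak z)\}$. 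Call a vector in $\mathbb N^{\times k}$ \emph{admissible} if it is $\leq_k$-minimal in its own fibre; so the exponent vectors of minimal $A$-words are exactly the admissible vectors.

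The key step is to show that the set $M \subseteq \mathbb N^{\times k}$ of admissible vectors is finite. Suppose not; then by Dickson's lemma applied to $M$ — or rather, iteratively — we can extract an infinite sequence $\mathbf v^{(1)}, \mathbf v^{(2)}, \dots$ in $M$ that is $\leq_k$-nondecreasing and eventually strictly increasing (Dickson's lemma guarantees that any infinite sequence in $\mathbb N^{\times k}$ has an infinite nondecreasing subsequence). Pick indices $p < q$ with $\mathbf v^{(p)} \leq_k \mathbf v^{(q)}$ and $\mathbf v^{(p)} \neq \mathbf v^{(q)}$. But then the $A$-word realizing $\mathbf v^{(q)}$ has a proper permuted subword realizing $\mathbf v^{(p)}$, and — here is where commutativity is used again — $a_1^{v^{(p)}_1}\cdots a_k^{v^{(p)}_k}$ and $a_1^{v^{(q)}_1}\cdots a_k^{v^{(q)}_k}$ need not be equal, so this does not immediately contradict admissibility of $\mathbf v^{(q)}$. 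The fix is to pass to fibres: there are only finitely many "small" admissible vectors below any bound, but to control the fibres I would instead argue as follows. For each admissible $\mathbf v$, every coordinate $v_j$ is bounded; indeed if some coordinate could be arbitrarily large among admissible vectors, Dickson's lemma on the (nonempty) set of admissible vectors with that coordinate $\geq N$ produces a minimal one, and subtracting it from a strictly larger admissible vector in the same fibre contradicts minimality — so the subtlety is precisely that two admissible vectors in the \emph{same} fibre are $\leq_k$-incomparable by definition, yet Dickson's lemma says any infinite antichain is impossible. Hence each fibre contains only finitely many admissible vectors, and more strongly the union $M$ of all of them, being an antichain in $\mathbb N^{\times k}$... — no: $M$ is not an antichain, since vectors in different fibres can be comparable. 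The correct route: the set $M^{\min}$ of $\leq_k$-minimal elements of $M$ is a finite antichain by Dickson's lemma, and I would prove that $M$ itself is finite by showing $M = M^{\min}$, i.e.\ that admissible vectors form an antichain after all — which fails in general — so instead I bound $M$ by noting every $\mathbf v \in M$ satisfies $\mathbf v \leq_k \mathbf w$ for some $\mathbf w$ in the finite set $M^{\min}$ of minimal admissible vectors together with the (finitely many, by Dickson) minimal elements of each fibre; a clean way to finish is: for each of the finitely many minimal elements $\mathbf w$ of the fibres, the box $[\mathbf 0, \mathbf w]$ is finite, and every admissible $\mathbf v$ lies in one such box because it is $\leq_k$-minimal in a fibre whose own minimum is some such $\mathbf w$ and $\mathbf v \geq_k \mathbf w$ forces $\mathbf v = \mathbf w$. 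Therefore $M$ is finite; let $L := \max_{\mathbf v \in M}(v_1 + \cdots + v_k) < \infty$, the maximal length of a minimal $A$-word.

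With $L$ in hand the conclusion is immediate: any minimal $A$-word has length in $\llb 1, L\rrb$ (length $0$ being excluded as it forces the empty word, which contributes nothing since $\varrho$ uses $m,n \in \mathbb N^+$), so for any two minimal $A$-words $\mathfrak a,\mathfrak b$ with $\pi_H(\mathfrak a)=\pi_H(\mathfrak b)$ we have $\|\mathfrak a\|\,\|\mathfrak b\|^{-1} \leq L/1 = L$, whence $\varrho_A^{\mathsf m}(H) \leq L < \infty$ (and $\varrho_A^{\mathsf m}(H) = 0$ if there are no such pairs, by the convention $\sup\emptyset := 0$).

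The main obstacle, as the second paragraph reveals, is pinning down the right finiteness mechanism: naively one wants minimal $A$-words to correspond to an antichain in $\mathbb N^{\times k}$ (which Dickson's lemma would instantly bound), but admissibility is defined fibrewise, so comparable admissible vectors in distinct fibres coexist. The resolution is to apply Dickson's lemma not to $M$ as an antichain but to obtain finiteness of the minimal elements of $M$ (and of each fibre), then to observe that every admissible vector is forced to coincide with such a minimal element by a short subtraction argument using commutativity — that subtraction argument, converting "$\mathbf v^{(p)} \leq_k \mathbf v^{(q)}$ in the same fibre" into "$\mathbf v^{(p)} = \mathbf v^{(q)}$", is the crux, and it is exactly where the hypothesis that $A$ consists of elements of a \emph{commutative} monoid enters decisively.
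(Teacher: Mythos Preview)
Your approach has a genuine gap: the set $M$ of admissible vectors (equivalently, of minimal $A$-words up to permutation) need \emph{not} be finite, so the quantity $L := \max_{\mathbf v \in M}(v_1+\cdots+v_k)$ does not exist in general. A minimal counterexample is $H=(\mathbb N,+)$ with $A=\{1\}$: for each $n\in\mathbb N$ the fibre over $n$ is the singleton $\{n\}\subseteq\mathbb N^{\times 1}$, so every vector is admissible and $M=\mathbb N$ is infinite. More generally, whenever the map $\mathbb N^{\times k}\to H$, $(v_1,\ldots,v_k)\mapsto a_1^{v_1}\cdots a_k^{v_k}$, is injective, every vector is $\leq_k$-minimal in its (singleton) fibre and $M=\mathbb N^{\times k}$. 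Your ``clean way to finish'' conflates two different things: ``the finitely many minimal elements of each fibre'' is correct fibrewise (Dickson), but there are in general infinitely many fibres, and the union of their minimal elements is exactly $M$ again. No amount of rearranging the argument will make $M$ finite, because it isn't.

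What the theorem actually asserts is much weaker than finiteness of $M$: it bounds the \emph{ratio} $\|\mathfrak a\|/\|\mathfrak b\|$ for minimal $A$-words $\mathfrak a,\mathfrak b$ lying in the \emph{same} fibre, even though each can be arbitrarily long. The paper's proof confronts this directly: it works not with single words but with triples $(\mathfrak a,\mathfrak b,\mathfrak z)$ such that $\pi_H(\mathfrak a\ast\mathfrak z)=\pi_H(\mathfrak b\ast\mathfrak z)$ and $\mathfrak a\ast\mathfrak z$ is minimal, applies Dickson's lemma in $\mathbb N^{\times 3s}$ to the valuation-vectors of such triples, and then runs a descent argument (under a carefully chosen artinian preorder on triples) to show that the supremum of $\|\mathfrak a\|/\|\mathfrak b\|$ over all triples is already attained on the finite Dickson-minimal set. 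The extra ``padding'' word $\mathfrak z$ is what makes the descent go through: when one subtracts off a Dickson-smaller triple, the leftover pieces can be absorbed into $\mathfrak z$, and commutativity is used precisely to rearrange the resulting products. Your intuition that commutativity enters via a ``subtraction argument'' is right, but the subtraction has to happen at the level of \emph{pairs} (or triples) of words in the same fibre, not at the level of individual minimal words.
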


\begin{proof}
Suppose $A$ is a finite subset of $H$ and put $s := |A| \in \mathbb N$. We may assume $s \ne 0$, or else $A$ is empty and the conclusion is trivial because the only minimal $A$-word is then the empty word $\varepsilon$. Accordingly, let $a_1, \ldots, a_s$ be an enumeration of $A$, and for each $t \in \llb 1, s \rrb$ denote by $\mathsf{v}_t$ the func\-tion $\mathscr F(A) \to \allowbreak \mathbb N$ that maps an $A$-word $\mathfrak a$ to its \evid{$a_t$-adic valuation}, i.e., to the number of indices $i \in \allowbreak \llb 1, \allowbreak \|\mathfrak a\| \rrb$ such that $a_t = \mathfrak a[i]$. It is trivial that
\begin{equation}\label{equ:norm-vs-valuations}
\|\mathfrak a\| = \mathsf{v}_1(\mathfrak a) + \cdots + \mathsf{v}_s(\mathfrak a),
\qquad \text{for all }\mathfrak a \in \mathscr F(A).
\end{equation}
Let $S$ be the set of all triples $(\mathfrak a, \mathfrak b, \mathfrak z)$ of $A$-words with $(\mathfrak a, \mathfrak b)\ne (\varepsilon, \varepsilon)$ and $\pi_H(\mathfrak a \ast \mathfrak z) = \pi_H(\mathfrak b \ast \mathfrak z)$ such that $\mathfrak a \ast \mathfrak z$ is a minimal $A$-word (the definition of $S$ is intentionally ``asymmetric'', insomuch as we do not require that also $\mathfrak b \ast \mathfrak z$ is a minimal $A$-word). 

If $(\mathfrak a, \mathfrak b, \mathfrak z) \in S$, then $\mathfrak b \ne \varepsilon$. Otherwise, $\mathfrak a$ would be a non-empty $A$-word; and since $(\mathscr F(A), \allowbreak \sqeq_H)$ is a strongly positive monoid (see the comments after Definition \ref{def:permutable-preorder}), we would find that $\mathfrak z$ is a proper subword of $\mathfrak a \ast \mathfrak z$ with $\pi_H(\mathfrak a \ast \mathfrak z) = \pi_H(\mathfrak z)$, contradicting that $\mathfrak a \ast \mathfrak z$ is a minimal $A$-word. So, it makes sense to define 
$$
\varrho^\ast := \sup \bigl\{ \|\mathfrak b\|^{-1} \|\mathfrak a\| \colon (\mathfrak a, \mathfrak b, \mathfrak z) \in S\}.
$$
It is clear that, if $(\mathfrak a, \mathfrak b)$ is a pair of minimal $A$-words with $\pi_H(\mathfrak a) = \allowbreak \pi_H(\mathfrak b)$ and $\mathfrak b \ne \allowbreak \varepsilon$, then $(\mathfrak a, \mathfrak b, \varepsilon) \in S$; and there is at least one such pair, since we can take $\mathfrak a = \mathfrak b = a_1$. It follows that $\varrho_A^\mathsf{m}(H) \le \varrho^\ast$. Consequently, in order to prove that $\varrho_A^\mathsf{m}(H)$ is finite, it suffices to check that $\varrho^\ast < \infty$.

For, let $\leq_{3s}$ be the product order induced on $\mathbb{N}^{\times 3s}$ by the standard order on $\mathbb{N}$
and $f$ be the function 
\[
S \to \mathbb{N}^{\times 3s} \colon (\mathfrak a, \mathfrak b, \mathfrak z) \mapsto (\mathsf v_1(\mathfrak a), \ldots, \mathsf v_s(\mathfrak a), \mathsf v_1(\mathfrak b), \ldots, \mathsf v_s(\mathfrak b), \mathsf v_1(\mathfrak z), \ldots, \mathsf v_s(\mathfrak z)).
\]
Since $A$ is non-empty, $f(S)$ is a non-empty subset of $\mathbb{N}^{\times 3s}$. Hence we gather from Dickson's lemma that the set $T$ of $\leq_{3s}$-min\-i\-mal elements of $f(S)$ is finite and non-empty. We thus see that
\[
R :=  \bigl\{ \|\mathfrak b\|^{-1} \|\mathfrak a\| \colon (\mathfrak a, \mathfrak b, \mathfrak z) \in f^{-1}(T) \bigr\}
\]
is a non-empty finite subset of $\mathbb{Q}_{\ge 0}$. In particular, it is straightforward from Eq.~\eqref{equ:norm-vs-valuations} that 
\[
\begin{split}
R & = \left\{\frac{\mathsf v_1(\mathfrak a) + \cdots + \mathsf v_s(\mathfrak a)}{\mathsf v_1(\mathfrak b) + \cdots + \mathsf v_s(\mathfrak b)} \colon (\mathfrak a, \mathfrak b, \mathfrak z) \in f^{-1}(T)\right\} \\
& = \left\{\frac{n_1 + \cdots + n_s}{d_1 + \cdots + d_s} \colon (n_1, \ldots, n_s, d_1, \ldots, d_s, v_1, \ldots, v_s) \in T\right\},
\end{split}
\]
so making it evident that $1 \le |R| \le |T| < \infty$. As a result, $R$ has a maximum element $r \in \mathbb{Q}_{\ge 0}$, and it is obvious that $r \le \rho^\ast$. We claim $r = \rho^\ast$ (of course, this will be enough to show that $\rho^\ast < \infty$).

Assume to the contrary that $r < \rho^\ast$. The set $\bar{S}$ of all triples $(\mathfrak a, \mathfrak b, \mathfrak z) \in S$ with $r < \|\mathfrak b\|^{-1} \|\mathfrak a\|$ is then a non-empty subset of $S \setminus f^{-1}(T)$; in particular, if $(\mathfrak a, \mathfrak b, \mathfrak z) \in f^{-1}(T)$, then $\|\mathfrak b\|^{-1} \|\mathfrak a\| \le r$. Let $\preceq_S$ be the binary relation on $S$ defined by $(\mathfrak a, \mathfrak b, \mathfrak z) \preceq_S (\mathfrak a', \mathfrak b', \mathfrak z')$, for some $(\mathfrak a, \mathfrak b, \mathfrak z), (\mathfrak a', \mathfrak b', \mathfrak z') \in S$, if and only if 
\[
\text{(i) } \|\mathfrak a\| + \|\mathfrak b\| < \|\mathfrak a'\| + \|\mathfrak b'\|, \quad \text{or} \quad
\text{(ii) }\|\mathfrak a\| + \|\mathfrak b\| = \|\mathfrak a'\| + \|\mathfrak b'\| \text{ and }\|\mathfrak z\| \le \|\mathfrak z'\|. 
\]
It is routine to verify that $\preceq_S$ is an artinian preorder on $S$. Since $\bar{S}$ is a non-empty subset of $S$, we thus obtain from the well-foundedness
of artinian preorders (see, e.g., Remark 3.9(3) in \cite{Tr20(c)}) that $\bar{S}$ has a $\preceq_S$-minimal element $\mathfrak p = (\mathfrak a, \mathfrak b, \mathfrak z)$. 
By construction, $\mathfrak p$ is in $S$ but not in $f^{-1}(T)$. In consequence, there exists $\mathfrak p_1 = \allowbreak (\mathfrak a_1, \allowbreak \mathfrak b_1, \mathfrak z_1) \in S$ such that $f(\mathfrak a_1, \mathfrak b_1, \mathfrak z_1) <_{3s} f(\mathfrak a, \mathfrak b, \mathfrak z)$, which means that 
\begin{equation}
\label{equ:triple-of-subwords}
\text{(j) }
\mathfrak a_1 \sqeq_H \mathfrak a, \ \mathfrak b_1 \sqeq_H \mathfrak b, \text{ and } \mathfrak z_1 \sqeq_H \mathfrak z,
\quad\text{and}\quad
\text{(jj) at least one of these inequalities is strict}. 
\end{equation}
Denote by $\approx_H$ the relation of $\sqeq_H$-equivalence on $\mathscr F(H)$, and for all $i \in \llb 1, s \rrb$ and $\mathfrak u, \mathfrak v \in \mathscr F(H)$ set $\Delta_i(\mathfrak u, \allowbreak \mathfrak v) := \allowbreak |\mathsf v_i(\mathfrak u) - \allowbreak \mathsf v_i(\mathfrak v)|$. 
We distinguish two cases depending on whether $\mathfrak b_1 \approx_H \mathfrak b$ or $\mathfrak b_1 \sqneq_H \mathfrak b$ (by the last dis\-played equation, there are no other possibilities). In each case, we will reach a contradiction, thus fin\-ish\-ing the proof of the theorem. 

\vskip 0.05cm

\textsc{Case 1:} $\mathfrak b_1 \approx_H \mathfrak b$. If $\mathfrak a_1 \approx_H \mathfrak a$, then $r < \|\mathfrak b\|^{-1} \|\mathfrak a\| = \|\mathfrak b_1\|^{-1} \|\mathfrak a_1\|$ (two $H$-words are $\sqeq_H$-equivalent only if they have the same length) and, by the second item of Eq.~\eqref{equ:triple-of-subwords}, $\mathfrak z_1 \sqneq_H \allowbreak \mathfrak z$; it follows that $\bar{S} \ni \allowbreak (\mathfrak a_1, \allowbreak \mathfrak b_1, \mathfrak z_1) \prec_S \mathfrak p$, which contradicts the $\preceq_S$-minimality of $\mathfrak p$ in $\bar{S}$. Therefore, we gather from the first item of Eq.~\eqref{equ:triple-of-subwords} that $\mathfrak a_1 \sqneq_H \mathfrak a$. Since $H$ is commutative and the equivalence $\mathfrak b_1 \approx_H \mathfrak b$ translates to $\mathfrak b_1$ and $\mathfrak b$ being the same $A$-word up to a permutation of their letters, it is then seen that 
\[
\pi_H(\mathfrak a_1 \ast \mathfrak z) =
\pi_H(\mathfrak a_1 \ast \mathfrak z_1 \ast \bar{\mathfrak z}_1) = 
\pi_H(\mathfrak a_1 \ast \mathfrak z_1) \, \pi_H(\bar{\mathfrak z}_1) = \pi_H(\mathfrak b \ast \mathfrak z_1) \, \pi_H(\bar{\mathfrak z}_1) = 
%\pi_H(\mathfrak b \ast \mathfrak z_1 \ast \bar{\mathfrak z}_1) = 
\pi_H(\mathfrak b \ast \mathfrak z) = \pi_H(\mathfrak a \ast \mathfrak z), 
\]
where $\bar{\mathfrak z}_1$ is the $A$-word $a_1^{\ast \Delta_1(\mathfrak z, \mathfrak z_1)} \ast \cdots \ast a_s^{\ast \Delta_s(\mathfrak z, \mathfrak z_1)}$.
Considering that $\mathfrak a_1 \ast \mathfrak z \sqneq_H \mathfrak a \ast \mathfrak z$, this is however in con\-tra\-dic\-tion with the fact that $\mathfrak a \ast \mathfrak  z$ is a minimal $A$-word.

\vskip 0.05cm

\textsc{Case 2:} $\mathfrak b_1 \sqneq_H \mathfrak b$. Let $\mathfrak a_2$ and $\mathfrak b_2$ be, resp., the $A$-words $a_1^{\ast \Delta_1(\mathfrak a, \mathfrak a_1)} \ast \cdots \ast a_s^{\ast \Delta_s(\mathfrak a, \mathfrak a_1)}$ and $a_1^{\ast \Delta_1(\mathfrak b, \mathfrak b_1)} \ast \cdots \ast a_s^{\ast \Delta_s(\mathfrak b, \mathfrak b_1)}$, and set $\mathfrak z_2 := \mathfrak a_1 \ast \mathfrak z$.
We have $\|\mathfrak b_1\| < \|\mathfrak b\|$ and hence $\mathfrak b_2 \ne \varepsilon$. 
Moreover, $\mathfrak a_2 \ast \mathfrak z_2 = \mathfrak a_2 \ast \mathfrak a_1 \ast \mathfrak z \approx_H \allowbreak \mathfrak a \ast \allowbreak \mathfrak z$ and $\pi_H(\mathfrak a_2\ast \mathfrak z_2) = \allowbreak \pi_H(\mathfrak a\ast \mathfrak z)$, since $H$ is commutative and $\mathfrak a_2\ast \mathfrak a_1=\mathfrak a$ up to a permutation of the letters. This shows that $\mathfrak a_2 \ast \mathfrak z_2$ is a minimal $A$-word. In addition,  
\[
\begin{split}
\pi_H(\mathfrak b_2 \ast \mathfrak z_2) = \pi_H(\mathfrak b_2 \ast \mathfrak a_1 \ast \mathfrak z) & = \pi_H(\mathfrak b_2 \ast \bar{\mathfrak z}_1) \, \pi_H(\mathfrak a_1 \ast \mathfrak z_1) = \pi_H(\mathfrak b_2 \ast \bar{\mathfrak z}_1) \, \pi_H(\mathfrak b_1 \ast \mathfrak z_1) \\ 
& = \pi_H(\mathfrak b_1 \ast \mathfrak b_2 \ast \mathfrak z_1 \ast \bar{\mathfrak z}_1) = \pi_H(\mathfrak b \ast \mathfrak z) = \pi_H(\mathfrak a \ast \mathfrak z)=\pi_H(\mathfrak a_2 \ast \mathfrak z_2),
\end{split}
\]
where $\bar{\mathfrak z}_1$ is the same as in \textsc{Case 1}.
It then follows that $(\mathfrak a_1, \mathfrak b_1, \mathfrak z_1)$ and $(\mathfrak a_2, \mathfrak b_2, \mathfrak z_2)$ are both in $S$, and  
\[
\|\mathfrak a_i\| + \|\mathfrak b_i\| \le \|\mathfrak a\| + \|\mathfrak b_i\| < \|\mathfrak a\| + \|\mathfrak b\| \qquad (i = 1, 2). 
\]
So, we get from the $\preceq_S$-minimality of $\mathfrak p$ in $\bar{S}$ that $\|\mathfrak b_i\|^{-1} \|\mathfrak a_i\| \le r$, and hence

\[r < \frac{\|\mathfrak a\|}{\|\mathfrak b\|} =\frac{\|\mathfrak a_1\|+ \|\mathfrak a_2\|}{\|\mathfrak b_1\| + \|\mathfrak b_2\|} \le \max\left\{\frac{\|\mathfrak a_1\|}{\|\mathfrak b_1\|}, \frac{\|\mathfrak a_2\|}{\|\mathfrak b_2\|}\right\} \le r, \]
which is however absurd. (For the second inequality in the last display, see, e.g.,~\cite[Lemma 1.41]{Bor-Poo-Sha-Zud14}.)
\end{proof}

It is not clear if, given a commutative monoid $H$ and a finite set $A \subseteq H$, the $A$-elasticity of $H$ is not only finite (as guaranteed by Theorem \ref{thm:main}), but also rational (cf.~Sect.~\ref{sec:closings}).

\section{Factorizations and [unions of] length sets}
\label{sec:premons}

Let $\mathcal{H}=(H,\preceq)$ be a \evid{premon}, i.e., a monoid $H$ paired with a preorder $\preceq$ on its underlying set (note that, in general, we require no compatibility between the operation in $H$ and the preorder). In particular, we write $\mid_H$ for the \evid{divisibility preorder} on $H$ (that is, $u \mid_H v$ if and only if $v \in H$ and $u \in HvH$) and $H^{\rm div}$ for the \evid{divisibility premon} $(H, \mid_H)$ of $H$.

Two elements $u, v \in H$ are \evid{$\preceq$-equivalent} if $u \preceq v \preceq u$, and $u$ is a \evid{$\preceq$-unit} (of $H$) if it is \evid{$\preceq$-equivalent} to the identity $1_H$ (otherwise, $u$ is a \evid{$\preceq$-non-unit}). A $\preceq$-non-unit $a \in \allowbreak H$ is then a \evid{$\preceq$-ir\-re\-duc\-i\-ble} if $a \ne xy$ for all $\preceq$-non-units $x, y \in H$ with $x \prec a$ and $y \prec a$. We use $\mathcal H^\times$ for the set of $\preceq$-units, and $\mathscr I(\mathcal H)$ for the set of $\preceq$-irreducibles of the monoid $H$, which we also refer to as the \evid{irreducibles} of the premon $\mathcal H$. These no\-tions were first considered in \cite[Definition 3.6]{Tr20(c)} and further studied in \cite{Co-Tr-21(a), Co-Tr-22(a), Tr22(a)}.

Following \cite[Sect.~3]{Co-Tr-22(a)}, we denote by $\sqeq_\mathcal{H}$  
the \evid{shuffling preorder} induced by $\preceq$, that is, the preorder on $\mathscr F(H)$ defined by $\mathfrak a \sqeq_\mathcal{H} \mathfrak b$, for some $H$-words $\mathfrak a$ and $\mathfrak b$, if and only if there is an injective function $\sigma \colon \llb 1, \allowbreak \|\mathfrak a\| \rrb \to \llb 1, \|\mathfrak b\| \rrb$ such that $\mathfrak a[i] \preceq \mathfrak b[\sigma(i)] \preceq \mathfrak a[i] $ for every $i \in \llb 1, \|\mathfrak a\| \rrb$. It turns out that, similar to the case of the pre\-order $\sqeq_H$ introduced in Definition \ref{def:permutable-preorder}, $\sqeq_\mathcal{H}$ is artinian. 

Accordingly, we let a \evid{$\preceq$-factorization} of an element $x \in H$ be an $\mathscr I(\mathcal H)$-word $\mathfrak a \in \allowbreak \pi_H^{-1}(x)$ and we set $
\mathcal{Z}_{\mathcal{H}}(x) := \allowbreak \pi_H^{-1}(x) \cap \allowbreak \mathscr F(\mathscr I(\mathcal H))$.
A \evid{minimal $\preceq$-factorization} of $x$ is then a $\sqeq_\mathcal{H}$-minimal word in $\mathcal{Z}_{\mathcal{H}}(x)$, i.e., an $\mathscr I(\mathcal H)$-word $\mathfrak a \in \allowbreak \pi_H^{-1}(x)$ such that there is no $\mathscr I(\mathcal H)$-word $\mathfrak b \in \allowbreak \pi_H^{-1}(x)$ with $\mathfrak b  \sqneq_\mathcal{H} \mathfrak a$. 
We denote the set of minimal $\preceq$-factorizations of $x$ by $\mathcal{Z}_{\mathcal{H}}^{\sf m}(x)$. It follows from the artinianity of $\sqeq_\mathcal{H}$ that $x$ has a $\preceq$-fac\-tor\-i\-za\-tion if and only if it has a minimal $\preceq$-fac\-tor\-i\-za\-tion (see Remark 3.3(1) in \cite{Co-Tr-22(a)}). 

\begin{definition}\label{def:elasticity}
\begin{enumerate*}[label=\textup{(\arabic{*})}, mode=unboxed]
\item Given a premon $\mathcal H=(H,\preceq)$, the \evid{$\preceq$-elasticity} $\varrho_\mathcal{H}(x)$ (resp., the \evid{minimal $\preceq$-elasticity} $\varrho_\mathcal{H}^\mathsf{m}(x)$) of an element $x \in H$ is the supremum of $\|\mathfrak b\|^{-1} \|\mathfrak a\|$ as $\mathfrak a$ and $\mathfrak b$ range over the non-empty $\preceq$-fac\-tor\-i\-za\-tions (resp., the non-empty minimal $\preceq$-factorizations) of $x$. (It is understood that $\sup\emptyset := 0$.)
\end{enumerate*}

\vskip 0.05cm

\begin{enumerate*}[label=\textup{(\arabic{*})}, mode=unboxed, resume]
\item The \evid{elasticity} $\varrho(\mathcal H)$ (resp., the \evid{minimal elasticity} $\varrho^\mathsf{m}(\mathcal H)$) of the premon $\mathcal H$ is then the supremum of $\varrho_\mathcal{H}(x)$ (resp., of $\varrho_\mathcal{H}^\mathsf{m}(x)$) as $x$ ranges over the $\preceq$-non-units of $H$. In particular, we let the elasticity (resp., the minimal elasticity) of the monoid $H$ be the elasticity (resp., the minimal elasticity) of $H^{\rm div}$.
\end{enumerate*}
\end{definition}
In the notation of Definition \ref{def:elasticity}, $\varrho_\mathcal{H}(x) = \varrho_\mathcal{H}^\mathsf{m}(x)= 0$ for every $x \in H$ whose set of $\preceq$-factorizations is empty. Also observe that $\varrho_\mathcal{H}^\mathsf{m}(1_H)=0$, since the only minimal $\preceq$-factorization of the identity $1_H$ is the empty word. Lastly, note that the elasticity of the monoid $H$ is nothing different from what we called the \emph{classical elasticity} of $H$ in Sect.~\ref{sec:intro}.

\begin{theorem}\label{thm:main-premon}
The minimal elasticity of a commutative premon with finitely many irreducibles is finite.
\end{theorem}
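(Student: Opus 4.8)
The plan is to derive Theorem \ref{thm:main-premon} from Theorem \ref{thm:main}. Let $\mathcal H = (H, \preceq)$ be a commutative premon (so the underlying monoid $H$ is commutative) with $\mathscr I(\mathcal H)$ finite, and set $A := \mathscr I(\mathcal H)$, a finite subset of $H$. I would prove the single inequality $\varrho^\mathsf{m}(\mathcal H) \le \varrho_A^\mathsf{m}(H)$; since $\varrho_A^\mathsf{m}(H) < \infty$ by Theorem \ref{thm:main} (here we use both that $A$ is finite and that $H$ is commutative), this finishes the proof.

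The heart of the matter is the claim that every non-empty minimal $\preceq$-factorization $\mathfrak a$ of a $\preceq$-non-unit $x \in H$ is a minimal $A$-word in the sense of Definition \ref{def:minimal A factorizations}. Suppose otherwise: then there is an $H$-word $\mathfrak z'$ with $\mathfrak z' \sqneq_H \mathfrak a$ and $\pi_H(\mathfrak z') = x = \pi_H(\mathfrak a)$. Since $\mathfrak z'$ is a permuted subword of the $A$-word $\mathfrak a$, each of its letters is a letter of $\mathfrak a$ and hence lies in $A = \mathscr I(\mathcal H)$, so $\mathfrak z' \in \pi_H^{-1}(x) \cap \mathscr F(\mathscr I(\mathcal H))$. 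Moreover the inclusion $\sqeq_H\, \subseteq\, \sqeq_{\mathcal H}$ holds (equality of letters implies $\preceq$-equivalence), while $\mathfrak z' \sqneq_H \mathfrak a$ forces $\|\mathfrak z'\| < \|\mathfrak a\|$ by the remark following Definition \ref{def:permutable-preorder}; consequently $\mathfrak a \not\sqeq_{\mathcal H} \mathfrak z'$, and therefore $\mathfrak z' \sqneq_{\mathcal H} \mathfrak a$ — contradicting that $\mathfrak a$ is a minimal $\preceq$-factorization of $x$. Granting the claim, if $\mathfrak a, \mathfrak b$ are non-empty minimal $\preceq$-factorizations of a $\preceq$-non-unit $x$, then they are minimal $A$-words with $\pi_H(\mathfrak a) = \pi_H(\mathfrak b)$, so $\|\mathfrak b\|^{-1}\|\mathfrak a\|$ is one of the ratios defining $\varrho_A^\mathsf{m}(H)$ and hence $\|\mathfrak b\|^{-1}\|\mathfrak a\| \le \varrho_A^\mathsf{m}(H)$. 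Taking the supremum first over all such pairs gives $\varrho_{\mathcal H}^\mathsf{m}(x) \le \varrho_A^\mathsf{m}(H)$ (trivially so, with value $0$, when $x$ has no $\preceq$-factorization), and then taking the supremum over all $\preceq$-non-units $x$ yields $\varrho^\mathsf{m}(\mathcal H) \le \varrho_A^\mathsf{m}(H)$.

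The only delicate point is the bridging claim, and within it the interplay of the two preorders $\sqeq_H$ and $\sqeq_{\mathcal H}$: minimal $\preceq$-factorizations are defined via $\sqeq_{\mathcal H}$, which permits replacing a letter by a merely $\preceq$-equivalent one, whereas minimal $A$-words are defined via $\sqeq_H$, which does not. The argument goes through because we only ever compare $\mathfrak a$ with its permuted subwords (never with arbitrary $\sqeq_{\mathcal H}$-smaller $H$-words), and a permuted subword of an $A$-word is again an $A$-word; once that observation is made, the inclusion $\sqeq_H\, \subseteq\, \sqeq_{\mathcal H}$ together with the length-drop property of strict $\sqeq_H$-inequalities is exactly what closes the gap. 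I do not foresee any further obstacle: modulo Theorem \ref{thm:main}, the statement is essentially immediate, which is consistent with the remark in the introduction that Theorem \ref{thm:main} is "essentially combinatorial" and makes no reference to irreducibles.
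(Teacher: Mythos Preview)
Your proposal is correct and follows essentially the same route as the paper's own proof: set $A = \mathscr I(\mathcal H)$, observe that every minimal $\preceq$-factorization of a $\preceq$-non-unit is a minimal $A$-word, conclude $\varrho^{\mathsf m}(\mathcal H) \le \varrho_A^{\mathsf m}(H)$, and invoke Theorem~\ref{thm:main}. The paper simply declares the bridging claim ``immediate'' and omits the details you spell out about the two preorders $\sqeq_H$ and $\sqeq_{\mathcal H}$; your justification via the inclusion $\sqeq_H \subseteq \sqeq_{\mathcal H}$ together with the length-drop property of strict $\sqeq_H$-inequalities is exactly the right way to unpack that step.
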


\begin{proof}
Let $\mathcal{H}=(H, \preceq)$ be a commutative premon with finitely many irreducibles and denote by $\varrho_{\rm irr}^\mathsf{m}(H)$ the minimal  $\mathscr{I}(\mathcal{H})$-elasticity $\varrho_{\rm irr}^\mathsf{m}(H)$ of $H$.
Since $\mathscr{I}(\mathcal{H})$ is a finite set (by hypothesis), we have from Theorem \ref{thm:main} that $\varrho_{\rm irr}^\mathsf{m}(H) < \infty$. This suffices to finish the proof, as it is immediate that a minimal $\preceq$-factorization of a $\preceq$-non-unit is a minimal $\mathscr{I}(\mathcal{H})$-word in the sense of Definition \ref{def:minimal A factorizations} and hence $\varrho^\mathsf{m}(\mathcal H) \le \varrho_{\rm irr}^\mathsf{m}(H)$.
\end{proof}

As mentioned in Sect.~\ref{sec:intro}, it was proved by Fan et al.~in \cite[Proposition 3.4(1)]{FGKT} that, if $H$ is a commutative unit-cancellative monoid $H$ such that the quotient $H/H^\times$ is finitely generated (i.e., $H$ is finitely generated modulo units), then the classical elasticity of $H$ is rational (and hence finite). In the next corollaries, we show how Theorem \ref{thm:main} can be used to recover the \emph{finiteness} part of Fan et al.'s result.

\begin{corollary}\label{cor:minimal-elasticity-of-f.g.u.-comm-monoid}
If a commutative monoid $H$ is finitely generated modulo units, then its minimal elasticity is finite.
\end{corollary}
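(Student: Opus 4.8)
The plan is to deduce the statement from the combinatorial Theorem~\ref{thm:main}. By Definition~\ref{def:elasticity} the minimal elasticity of $H$ is the minimal elasticity of the premon $H^{\rm div} = (H, \mid_H)$; and, exactly as in the proof of Theorem~\ref{thm:main-premon}, a minimal $\mid_H$-factorization of a $\mid_H$-non-unit is in particular a minimal $\mathscr I(H)$-word in the sense of Definition~\ref{def:minimal A factorizations} (since the relation $\sqneq_H$ is contained in the strict shuffling preorder $\sqneq_{H^{\rm div}}$), so that $\varrho^\mathsf{m}(H^{\rm div}) \le \varrho_{\mathscr I(H)}^\mathsf{m}(H)$. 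The obstruction to finishing here is that $\mathscr I(H)$ need not be finite when $H$ is merely finitely generated modulo units — and, unlike in the cancellative case, this is a genuine phenomenon (an infinite group of units, or non-cancellativity, can each produce infinitely many irreducibles). So the real content is to replace $\mathscr I(H)$ by a finite set: since $H$ is finitely generated modulo units, fix a finite $A \subseteq H \setminus H^\times$ with $H = H^\times \langle A \rangle_H$, and reduce the corollary to the inequality $\varrho^\mathsf{m}(H^{\rm div}) \le \varrho_A^\mathsf{m}(H)$, whose right-hand side is finite by Theorem~\ref{thm:main}.

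First I would prove an auxiliary lemma: \emph{in a commutative monoid finitely generated modulo units, every irreducible is $\mid_H$-associate to one of the chosen generators}; in particular there are only finitely many $\mid_H$-association classes of irreducibles. The proof takes, for an irreducible $c$, a shortest expression $c = u h_{i_1} \cdots h_{i_\ell}$ with $u \in H^\times$ and the $h_{i_j} \in A$, splits this product into two non-unit factors in all possible ways, uses that $c$ is irreducible to force one factor in each splitting to be $\mid_H$-associate to $c$, and uses minimality of $\ell$ to exclude $\ell \ge 2$. Granting the lemma, given a $\mid_H$-non-unit $x$ and two minimal $\mid_H$-factorizations $\mathfrak a$ and $\mathfrak b$ of $x$, I would replace each (irreducible) letter of $\mathfrak a$ and $\mathfrak b$ by a generator in $A$ to which it is $\mid_H$-associate, obtaining $A$-words $\mathfrak a^\sharp$ and $\mathfrak b^\sharp$ of the same length as $\mathfrak a$ and $\mathfrak b$, respectively; since $\mid_H$-association is a monoid congruence (divisibility being multiplicative), $\pi_H(\mathfrak a^\sharp)$ and $\pi_H(\mathfrak b^\sharp)$ are both $\mid_H$-associate to $x$, hence to each other, i.e.\ they differ by a unit. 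It then remains to absorb this unit discrepancy: append to $\mathfrak a^\sharp$ and $\mathfrak b^\sharp$ suitable $A$-words, chosen so as to make the two products equal while leaving the ratio of lengths unchanged and keeping the words minimal, using that $H$ is commutative so that letters may be freely permuted.

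The step I expect to be the main obstacle is exactly this last absorption — turning ``products $\mid_H$-associate to $x$'' into ``products equal to the same element'', together with the verification that the modified $A$-words are still minimal in the sense of Definition~\ref{def:minimal A factorizations}. Because $H$ is not assumed cancellative, the stray units cannot simply be cancelled, and the association class of an irreducible does not pin down the irreducible; so I do not expect the correction to succeed letter by letter. Rather, I would organize the whole passage from $\mid_H$-factorizations to $A$-words as a descent over a well-founded order on the set of ``bad'' configurations — mirroring the artinian order $\preceq_S$ and the Dickson's-lemma argument in the proof of Theorem~\ref{thm:main}, so that the corollary ends up resting on the same mechanism as the theorem it is derived from. (Should Proposition~\ref{prop:reduction-to-irreds} already supply the inequality $\varrho^\mathsf{m}(H^{\rm div}) \le \varrho_A^\mathsf{m}(H)$ for $A$ a finite generating set modulo units, the proof would of course collapse to an application of that proposition followed by Theorem~\ref{thm:main}.)
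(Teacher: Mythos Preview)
Your route diverges from the paper's, and the divergence is exactly where your gap lies. The paper does not try to compare $\varrho^{\mathsf m}(H^{\rm div})$ with $\varrho_A^{\mathsf m}(H)$ for a finite generating set $A$; instead it passes to the reduced quotient $H/H^\times$. In a commutative monoid the $\mid_H$-units are the units, the irreducibles of $H/H^\times$ are precisely the cosets of the irreducibles of $H$, and the minimal elasticity is unchanged under the quotient map. The quotient is now reduced and finitely generated, and the paper invokes \cite[Example 2.5(1) and Theorem 4.7]{Co-Tr-22(a)} to conclude that its set of irreducibles is \emph{finite}; Theorem~\ref{thm:main-premon} (hence Theorem~\ref{thm:main}) then applies directly. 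In other words, the obstruction you isolate --- that $\mathscr I(H)$ need not be finite --- disappears once you kill the units, and no ``absorption'' step is ever needed.

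Your proposed argument, by contrast, has a genuine error and an unfilled gap. The error is the claim that two $\mid_H$-associate elements ``differ by a unit'': in a non-cancellative commutative monoid, $a \mid b$ and $b \mid a$ only give $b = ax$ and $a = by$ with $a = a(yx)$, which does not force $yx \in H^\times$. You partly notice this later (``the stray units cannot simply be cancelled''), but the damage is already done: after replacing each irreducible letter by an associate generator you obtain $A$-words $\mathfrak a^\sharp$, $\mathfrak b^\sharp$ whose $\pi_H$-images are merely associate, not equal, and you have no mechanism to fix this while preserving minimality and the length ratio. Your fallback --- a fresh Dickson-type descent ``mirroring'' the proof of Theorem~\ref{thm:main} --- is not a reduction to that theorem but a promise to reprove something like it, with no details. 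Finally, Proposition~\ref{prop:reduction-to-irreds} does not rescue the plan: it takes as input a subset $A$ of the $\preceq$-irreducibles and builds a new preorder, but it says nothing about comparing $\varrho^{\mathsf m}(H^{\rm div})$ with $\varrho_A^{\mathsf m}(H)$ when $A$ is merely a generating set modulo units.
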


\begin{proof}
By the commutativity of $H$, an element $u \in H$ divides the identity $1_H$ if and only if $u$ is a unit. Consequently, an irreducible of $H$ is a non-unit $a \in H$ that does not factor as a product of two non-units each of which is not divisible by $a$; and the irreducibles of the quotient monoid $H/H^\times$ are nothing but the cosets of the irreducibles of $H$. It follows that $H$ and $H/H^\times$ have the same minimal elasticity. On the other hand, since $H$ is finitely generated modulo units if and only if $H/H^\times$ is finitely generated, we may assume without loss of generality that $H$ is reduced. By \cite[Example 2.5(1) and Theorem 4.7]{Co-Tr-22(a)}, the set of irreducibles of $H$ is then finite; and by Theorem \ref{thm:main}, this suffices to complete the proof.
\end{proof}

\begin{corollary}\label{cor:unit-cancellative}
If a commutative, unit-cancellative monoid is finitely generated modulo units, then its \textup{(}classical\textup{)} elasticity is finite.
\end{corollary}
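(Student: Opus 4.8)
The plan is to reduce the statement to Corollary \ref{cor:minimal-elasticity-of-f.g.u.-comm-monoid} by showing that, for a commutative \emph{unit}-cancellative monoid $H$, the classical elasticity and the minimal elasticity coincide. Recall from Definition \ref{def:elasticity}(2) that the classical elasticity of $H$ equals $\varrho(H^{\rm div})$ and its minimal elasticity equals $\varrho^\mathsf{m}(H^{\rm div})$; and since a minimal $\mid_H$-factorization is in particular a $\mid_H$-factorization, one always has $\varrho^\mathsf{m}(H^{\rm div}) \le \varrho(H^{\rm div})$. So I would only have to prove the reverse inequality, and for that it is enough to establish the following claim: \emph{if $H$ is commutative and unit-cancellative, then every $\mid_H$-factorization of every $x \in H$ is already a minimal $\mid_H$-factorization of $x$}; indeed, this makes $\varrho_{H^{\rm div}}(x) = \varrho_{H^{\rm div}}^\mathsf{m}(x)$ for each $x$, whence $\varrho(H^{\rm div}) = \varrho^\mathsf{m}(H^{\rm div})$.

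To prove the claim, I would fix an $\mathscr I(H^{\rm div})$-word $\mathfrak a$ with $\pi_H(\mathfrak a) = x$ and assume, for the sake of contradiction, that $\mathfrak a$ is not $\sqeq_{H^{\rm div}}$-minimal in $\mathcal Z_{H^{\rm div}}(x)$, so that there is an $\mathscr I(H^{\rm div})$-word $\mathfrak b$ with $\pi_H(\mathfrak b) = x$ and $\mathfrak b \sqneq_{H^{\rm div}} \mathfrak a$. A short argument shows $\|\mathfrak b\| < \|\mathfrak a\|$: an injection $\sigma \colon \llb 1, \|\mathfrak b\| \rrb \to \llb 1, \|\mathfrak a\| \rrb$ witnessing $\mathfrak b \sqeq_{H^{\rm div}} \mathfrak a$ forces $\|\mathfrak b\| \le \|\mathfrak a\|$, and if equality held then $\sigma$ would be a bijection along which the $\mid_H$-equivalence of matched letters would give $\mathfrak a \sqeq_{H^{\rm div}} \mathfrak b$, contradicting the strictness of $\mathfrak b \sqneq_{H^{\rm div}} \mathfrak a$. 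For each $i$ the letters $\mathfrak b[i]$ and $\mathfrak a[\sigma(i)]$ are $\mid_H$-equivalent, so in particular $\mathfrak b[i] \mid_H \mathfrak a[\sigma(i)]$; multiplying these relations together (legitimate since $H$ is commutative) gives $x = \pi_H(\mathfrak b) = \prod_{i} \mathfrak b[i] \mid_H \prod_{i} \mathfrak a[\sigma(i)]$, say $\prod_{i} \mathfrak a[\sigma(i)] = tx$ with $t \in H$. Writing $w$ for the product of those letters of $\mathfrak a$ whose index lies outside the image of $\sigma$, commutativity also yields
\[
x = \pi_H(\mathfrak a) = \Bigl(\prod_{i=1}^{\|\mathfrak b\|} \mathfrak a[\sigma(i)]\Bigr)\, w = t\,x\,w .
\]

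Since $\|\mathfrak b\| < \|\mathfrak a\|$, the leftover letters form a non-empty word, so $w$ is a non-empty product of irreducibles, each of which is a $\mid_H$-non-unit and hence a non-unit of $H$; as a non-empty product of non-units in a commutative monoid is again a non-unit, $w$ and therefore $tw$ are non-units. But then $x = (tw)\,x$ with $tw$ a non-unit contradicts the unit-cancellativity of $H$, which proves the claim and with it the equality $\varrho(H^{\rm div}) = \varrho^\mathsf{m}(H^{\rm div})$. Since $H$ is commutative and finitely generated modulo units, Corollary \ref{cor:minimal-elasticity-of-f.g.u.-comm-monoid} gives $\varrho^\mathsf{m}(H^{\rm div}) < \infty$, so the classical elasticity of $H$ is finite, as desired. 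I do not anticipate a real obstacle here: the argument is essentially bookkeeping, and the only step that demands a little care is handling the shuffling preorder $\sqeq_{H^{\rm div}}$ (rather than the plain permuted-subword preorder $\sqeq_H$) so as to correctly extract from $\mathfrak b \sqneq_{H^{\rm div}} \mathfrak a$ both the drop in length and the divisibility relations among matched letters.
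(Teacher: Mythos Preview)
Your proposal is correct and follows essentially the same route as the paper: reduce to Corollary~\ref{cor:minimal-elasticity-of-f.g.u.-comm-monoid} by arguing that, in a commutative unit-cancellative monoid, every $\mid_H$-factorization is already a minimal $\mid_H$-factorization, so the classical and minimal elasticities coincide. The only difference is that the paper dispatches this key step by citation (to \cite[Corollary 4.4]{Tr20(c)} and \cite[Proposition 4.7(v)]{An-Tr18}), whereas you unpack it by hand via the $x = (tw)x$ argument; your direct proof is sound and self-contained.
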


\begin{proof}
This follows from Corollary \ref{cor:minimal-elasticity-of-f.g.u.-comm-monoid} when considering that, in a commutative and unit-cancellative monoid $H$, every irreducible (i.e., every $\mid_H$-irreducible) is an atom \cite[Corollary 4.4]{Tr20(c)} and every minimal $\mid_H$-fac\-tor\-i\-za\-tion is a $\mid_H$-factorization \cite[Proposition 4.7(v)]{An-Tr18}.
\end{proof}

The following definitions generalize sets of lengths and related invariants from the classical theory of factorization, see \cite[Sects.~2.3 and 4.1]{An-Tr18} and \cite[Definition 3.2]{Co-Tr-22(a)}.

\begin{definition}\label{def:lengths&unions} 
\begin{enumerate*}[label=\textup{(\arabic{*})}, mode=unboxed]
\item\label{def:lengths&unions(1)} Given a premon $\mathcal H = (H, \preceq)$ and an element $x \in H$, we let 
\[
\mathsf{L}_{\mathcal{H}}(x) := {\bigl\{ \|\mathfrak{a}\|_H: \mathfrak{a} \in \mathcal{Z}_{\mathcal{H}}(x) \bigr\}} \subseteq \mathbb N
\quad\text{and}\quad
\mathsf{L}_{\mathcal{H}}^{\sf m}(x) := {\bigl\{ \|\mathfrak{a}\|_H: \mathfrak{a} \in \mathcal{Z}_{\mathcal{H}}^{\sf m}(x) \bigr\}} \subseteq \mathbb N
\]
be, resp., the \evid{length set} and the \evid{minimal length set} of $x$ (relative to $\mathcal H$). Accordingly, we refer to
\[
\mathscr{L}(\mathcal{H}) := \bigl\{ \mathsf{L}_{\mathcal{H}}(x) \colon x\in H\setminus \mathcal{H}^\times \bigr\}
\quad\text{and}\quad
\mathscr{L}^{\sf m}(\mathcal{H}) := \bigl\{ \mathsf{L}_{\mathcal{H}}^{\sf m}(x) \colon x\in H\setminus \mathcal{H}^\times \bigr\},
\]
resp., as the \evid{system of length sets} and the \evid{system of minimal length sets} of $\mathcal H$; and given $k \in \mathbb N$, we call
\[
\mathscr U_k(\mathcal H) := \bigcup \,\{ L \in \mathscr L(\mathcal H) \colon k \in L\}
\qquad\text{and}\qquad
\mathscr U_k^{\mathsf{m}}(\mathcal H) := \bigcup \,\{ L \in \mathscr L^\mathsf{m}(\mathcal H) \colon k \in L\}
\]
resp., the \evid{union of length sets} and the \evid{union of minimal length sets} containing $k$.
\end{enumerate*}

\vskip 0.05cm

\begin{enumerate*}[label=\textup{(\arabic{*})}, mode=unboxed, resume]
\item In particular, we take the system of length sets (resp., the system of minimal length sets) of the monoid $H$ to be the system of length sets (resp., of minimal length sets) of the divisibility premon $H^{\rm div}$ of $H$, and we write $\mathscr L(H)$ for $\mathscr L(H^{\rm div})$ and $\mathscr L^{\sf m}(H)$ for $\mathscr L^{\sf m}(H^{\rm div})$. The same goes with [minimal] length sets and unions of [minimal] length sets.
\end{enumerate*}
\end{definition}

Note that, in the notation of Definition \ref{def:lengths&unions}, the sets $\mathscr U_0(\mathcal H)$ and $ \mathscr U_0^{\sf m}(\mathcal H)$ are both empty, because there is no $\preceq$-non-unit whose length set contains $0$ (in fact, $0$ is only a $\preceq$-length of the identity $1_H$).

\begin{proposition}\label{prop:elasticity}
The following hold for a premon $\mathcal H = (H, \preceq)$:

\begin{enumerate}[label=\textup{(\roman{*})}]
\item\label{prop:elasticity 1} 
$\varrho(\mathcal H) \le 1$ (resp., $\varrho^\mathsf{m}(\mathcal H) \le 1$) if and only if $|\mathsf L_\mathcal{H}(x)| \le 1$ (resp., $|\mathsf L_\mathcal{H}^{\sf m}(x)| \le 1$) for each $x\in H\setminus \mathcal{H}^\times$.
\item\label{prop:elasticity 2} If $\varrho(\mathcal H)$ (resp., $\varrho^\mathsf{m}(\mathcal H)$) is finite, then $\mathscr U_k(\mathcal H)$ (resp., $\mathscr U_k^{\sf m}(\mathcal H)$) is finite for every $k \in \mathbb N$.
\end{enumerate}
\end{proposition}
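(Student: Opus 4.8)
The plan is to prove the two parts of Proposition~\ref{prop:elasticity} separately, handling the ``$\preceq$-version'' and the ``minimal $\preceq$-version'' in exactly the same way (since the arguments only use formal properties of the fibers $\mathcal{Z}_\mathcal{H}(x)$, $\mathcal{Z}_\mathcal{H}^{\sf m}(x)$ and of the length sets, not which of the two we pick), so I will write the argument once and remark that the minimal case is identical.

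For part~\ref{prop:elasticity 1}, I would argue by unwinding the definitions. Suppose first that $|\mathsf L_\mathcal{H}(x)| \le 1$ for every $\preceq$-non-unit $x$. Fix such an $x$ with a non-empty $\preceq$-factorization; then $\mathsf L_\mathcal{H}(x)$ is a singleton, say $\{\ell\}$ with $\ell \ge 1$, so every non-empty $\preceq$-factorization of $x$ has length $\ell$, whence $\|\mathfrak b\|^{-1}\|\mathfrak a\| = 1$ for any two such $\mathfrak a, \mathfrak b$; taking suprema gives $\varrho_\mathcal{H}(x) \le 1$ for every $x$ and hence $\varrho(\mathcal H) \le 1$. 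Conversely, suppose $\varrho(\mathcal H) \le 1$ and assume for contradiction that some $\preceq$-non-unit $x$ has $|\mathsf L_\mathcal{H}(x)| \ge 2$; pick $m < n$ in $\mathsf L_\mathcal{H}(x)$, which forces $m \ge 1$, and choose $\preceq$-factorizations $\mathfrak a, \mathfrak b$ of $x$ with $\|\mathfrak a\| = n$ and $\|\mathfrak b\| = m$. Then $\varrho_\mathcal{H}(x) \ge \|\mathfrak b\|^{-1}\|\mathfrak a\| = n/m > 1$, contradicting $\varrho(\mathcal H) \le 1$. The only mild subtlety is to make sure the degenerate cases are correctly dispatched: if $x$ has no $\preceq$-factorization then $\mathsf L_\mathcal{H}(x) = \emptyset$ has cardinality $0 \le 1$ and $\varrho_\mathcal{H}(x) = 0 \le 1$ by the $\sup\emptyset := 0$ convention, so such $x$ cause no trouble on either side; and one should note $0 \notin \mathsf L_\mathcal{H}(x)$ for a $\preceq$-non-unit $x$, as recorded in the excerpt, so the smaller of two lengths in $\mathsf L_\mathcal{H}(x)$ is always $\ge 1$ and the ratio $n/m$ is well defined.

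For part~\ref{prop:elasticity 2}, suppose $\varrho(\mathcal H) = c < \infty$ and fix $k \in \mathbb N$. If $k = 0$ then $\mathscr U_0(\mathcal H) = \emptyset$ by the remark following Definition~\ref{def:lengths&unions}, so we may assume $k \ge 1$. Let $\ell \in \mathscr U_k(\mathcal H)$; by definition there is a $\preceq$-non-unit $x$ with $\{k, \ell\} \subseteq \mathsf L_\mathcal{H}(x)$, so $x$ has $\preceq$-factorizations $\mathfrak a$ and $\mathfrak b$ of lengths $\ell$ and $k$ respectively. If $\ell \ge k$ then $c \ge \varrho_\mathcal{H}(x) \ge \|\mathfrak b\|^{-1}\|\mathfrak a\| = \ell/k$, so $\ell \le ck$; if $\ell < k$ then trivially $\ell < k \le ck$ (note $c \ge 1$ automatically once some $\mathsf L_\mathcal{H}(x)$ contains both $k$ and something, and in any case $\ell < k$). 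Hence $\mathscr U_k(\mathcal H) \subseteq \llb 1, \lfloor ck \rfloor \rrb$ is finite. The same computation with $\mathcal{Z}_\mathcal{H}^{\sf m}$, $\mathsf L_\mathcal{H}^{\sf m}$, $\varrho^\mathsf{m}$ and $\mathscr U_k^{\sf m}$ in place of their non-superscripted counterparts proves the minimal version verbatim.

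I do not expect any genuine obstacle here: both parts are essentially bookkeeping with the definition of elasticity as a supremum of length ratios. The one place to be careful is the boundary behaviour --- elements with empty $\preceq$-factorization sets, the role of the convention $\sup\emptyset := 0$, and the fact that $0$ never appears in the length set of a $\preceq$-non-unit --- so that the equivalences and inclusions are stated cleanly for \emph{all} $\preceq$-non-units, not just those that happen to factor. If anything is ``the hard part'' it is only the temptation to conflate the $\preceq$-case and the minimal $\preceq$-case prematurely; I would keep them notationally distinct and observe explicitly that every step uses only that $\mathsf L_\mathcal{H}(x)$ (resp.\ $\mathsf L_\mathcal{H}^{\sf m}(x)$) is the set of lengths of words in the fiber $\mathcal{Z}_\mathcal{H}(x)$ (resp.\ $\mathcal{Z}_\mathcal{H}^{\sf m}(x)$) over which the relevant elasticity is defined.
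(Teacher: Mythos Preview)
Your proposal is correct and follows essentially the same approach as the paper: both parts are handled by direct manipulation of the definitions, with the minimal case dispatched by the observation that the argument is formally identical. The only cosmetic difference is that for part~\ref{prop:elasticity 2} the paper argues by contradiction (assuming $\mathscr U_k(\mathcal H)$ is infinite and extracting a sequence of factorization pairs with unbounded length ratio), whereas you give the explicit bound $\mathscr U_k(\mathcal H) \subseteq \llb 1, \lfloor ck \rfloor \rrb$ directly; this is arguably a little cleaner but not a different idea.
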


\begin{proof}
We focus on length sets and leave the corresponding statements on \emph{minimal} length sets to the reader (the proofs are pretty much the same in either case).

\vskip 0.05cm

\ref{prop:elasticity 1} If $|\mathsf{L}_\mathcal{H}(x_0)| \le 1$ for every $\preceq$-non-unit $x \in H$, then $\varrho_\mathcal{H}(x)$ is either $0$ or $1$, and hence $\varrho(\mathcal H) \le 1$. As for the converse, assume $\varrho(\mathcal H) \le 1$ and suppose by way of contradiction that there is a $\preceq$-non-unit $x_0 \in \allowbreak H$ such that $|\mathsf{L}_\mathcal{H}(x_0)| \ge 2$. There exist $k_1, k_2 \in \mathsf{L}_\mathcal{H}(x_0)$ with $1 \le k_1 < k_2$, implying that $\varrho(\mathcal H) \ge \allowbreak \varrho_\mathcal{H}(x_0) \ge \allowbreak k_2/k_1 > 1$, which is absurd.

\vskip 0.05cm

\ref{prop:elasticity 2} Let $\varrho(\mathcal H)$ be finite and suppose for a contradiction that $|\mathscr{U}_k(\mathcal H)| = \infty$ for some $k \in \mathbb N$. Since $\mathscr{U}_0(\mathcal H)$ is empty, $k$ is then a \emph{positive} integer and there is a sequence $(\mathfrak a_1, \mathfrak b_1), \allowbreak (\mathfrak a_2, \mathfrak b_2), \ldots$ of pairs of $\mathscr I(\mathcal H)$-words such that, for every $i \in \mathbb N^+$, $\mathfrak a_i$ and $\mathfrak b_i$ are $\preceq$-factorizations of the same $\preceq$-non-unit $x_i \in H$ and $k = \allowbreak \|\mathfrak a_i\| \le \allowbreak \|\mathfrak b_i\|<\|\mathfrak b_{i+1}\|$. This, however, contradicts the finiteness of $\varrho(\mathcal H)$.
\end{proof}

\begin{proposition}\label{prop:bounded word length}
Let $\mathcal H=(H,\preceq)$ be a premon. Then either there is a bound $M \in \mathbb{N}$ such that $\|\mathfrak a\| \le M$ for every minimal $\preceq$-factorization $\mathfrak a$, or for each $k \in \mathbb{N}$ there is a $\preceq$-minimal factorization of length $k$.
\end{proposition}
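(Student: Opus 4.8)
The plan is to show that the set $\mathcal L$ of all $k \in \mathbb N$ for which there exists a minimal $\preceq$-factorization of length $k$ (equivalently, $\mathcal L = \bigcup_{x \in H} \mathsf L_\mathcal{H}^{\sf m}(x)$ in the notation of Definition~\ref{def:lengths&unions}) is \emph{downward closed}, i.e., $n \in \mathcal L$ and $0 \le n' \le n$ imply $n' \in \mathcal L$. Granting this, the dichotomy in the statement is immediate: a downward closed subset of $\mathbb N$ is either bounded --- and then $\mathcal L \subseteq \llb 0, M \rrb$ for $M := \sup \mathcal L \in \mathbb N$, which is the first alternative --- or equal to $\mathbb N$, which is the second alternative. (In fact $0 \in \mathcal L$, the empty word being a minimal $\preceq$-factorization of $1_H$; even the degenerate case $\mathcal L = \emptyset$ would fall under the first alternative, vacuously.)

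The heart of the matter is the claim that \emph{if $\mathfrak a$ is a minimal $\preceq$-factorization with $\|\mathfrak a\| = n \ge 1$, then the $\mathscr I(\mathcal H)$-word $\mathfrak a'$ obtained from $\mathfrak a$ by deleting its last letter is again a minimal $\preceq$-factorization, now of length $n - 1$.} To prove it, write $\mathfrak a = \mathfrak a' \ast a$ with $a := \mathfrak a[n] \in \mathscr I(\mathcal H)$, and put $x := \pi_H(\mathfrak a)$ and $y := \pi_H(\mathfrak a')$, so that $\mathfrak a' \in \mathcal Z_\mathcal{H}(y)$ and $x = ya$. Assume toward a contradiction that $\mathfrak a'$ is not a minimal $\preceq$-factorization of $y$; then there is $\mathfrak c \in \mathcal Z_\mathcal{H}(y)$ with $\mathfrak c \sqneq_\mathcal{H} \mathfrak a'$, and in particular $\|\mathfrak c\| < \|\mathfrak a'\|$ (just as $\mathfrak a \sqneq_X \mathfrak b$ forces $\|\mathfrak a\| < \|\mathfrak b\|$ for the preorder of Definition~\ref{def:permutable-preorder}). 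Now $\mathfrak c \ast a$ is an $\mathscr I(\mathcal H)$-word with $\pi_H(\mathfrak c \ast a) = \pi_H(\mathfrak c)\,a = ya = x$, hence $\mathfrak c \ast a \in \mathcal Z_\mathcal{H}(x)$; moreover $\mathfrak c \ast a \sqeq_\mathcal{H} \mathfrak a' \ast a = \mathfrak a$ (extend a witnessing injection $\llb 1, \|\mathfrak c\| \rrb \to \llb 1, \|\mathfrak a'\|\rrb$ by $\|\mathfrak c\| + 1 \mapsto \|\mathfrak a'\| + 1$), and this inequality is strict since $\|\mathfrak c \ast a\| = \|\mathfrak c\| + 1 < \|\mathfrak a'\| + 1 = \|\mathfrak a' \ast a\|$ excludes the reverse $\sqeq_\mathcal{H}$-inequality. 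Thus $\mathfrak c \ast a$ is an element of $\mathcal Z_\mathcal{H}(x)$ lying strictly $\sqeq_\mathcal{H}$-below $\mathfrak a$, contradicting the $\sqeq_\mathcal{H}$-minimality of $\mathfrak a$ in $\mathcal Z_\mathcal{H}(x)$. This proves the claim.

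From the claim, a descending induction on $n$ gives that $n \in \mathcal L$ forces $\llb 0, n \rrb \subseteq \mathcal L$, so $\mathcal L$ is downward closed and the proposition follows as explained above. I expect no genuine obstacle: apart from bookkeeping with word lengths and the definition of minimal $\preceq$-factorization, the only point to watch is the interaction of the shuffling preorder $\sqeq_\mathcal{H}$ with concatenation --- namely that appending a fixed letter turns a strict $\sqeq_\mathcal{H}$-inequality into a strict one --- which is the analogue for $\sqeq_\mathcal{H}$ of the strong positivity of $\sqeq_H$ recorded after Definition~\ref{def:permutable-preorder} and is, in any case, a one-line verification. (Deleting the first letter rather than the last works equally well; note, however, that this one-endpoint argument does not obviously survive the deletion of an interior letter when $H$ is non-commutative --- but the weaker version already suffices here.)
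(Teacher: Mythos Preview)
Your proof is correct and is essentially the contrapositive rephrasing of the paper's own argument: the paper assumes there is no minimal $\preceq$-factorization of length $k$ and inducts upward to show there is none of length $n \ge k$, while you assume there is one of length $n$ and truncate to produce one of length $n-1$; in both cases the engine is precisely the observation that $\mathfrak c \sqneq_\mathcal{H} \mathfrak a'$ implies $\mathfrak c \ast a \sqneq_\mathcal{H} \mathfrak a' \ast a$ (strong positivity of the shuffling preorder), and the two inductive schemes are logically equivalent. Your packaging via the downward-closedness of $\mathcal L$ is perhaps slightly cleaner, but there is no substantive difference.
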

\begin{proof}
Suppose that there is an integer $k \ge 1$ such that the set of minimal $\preceq$-factorizations of length $k$ is empty. We claim that there is no minimal $\preceq$-factorization of length $n \ge k$, and we proceed to prove it by induction on $n$. If $n = k$, the assertion is obvious. Otherwise, let $\mathfrak a = a_1 \ast \cdots \ast a_n$ be an $\mathscr{I}(\mathcal{H})$-word of length $n \ge k+1$ and assume inductively that there is no minimal $\preceq$-factorization of length $n-1$. In particular, this means that $\mathfrak a' := a_1 \ast \cdots \ast a_{n-1}$ is not a minimal $\preceq$-factorization, viz., there exists an $\mathscr{I}(\mathcal{H})$-word $\mathfrak b$ with $\mathfrak b \sqneq_\mathcal{H} \mathfrak a'$ and $\pi_H(\mathfrak b)=\pi_H(\mathfrak a')$. Since $(\mathscr{F}(\mathscr{I}(\mathcal{H})),\sqeq_\mathcal{H})$ is a strongly positive monoid (as noted in the second paragraph of this section), it follows that $\mathfrak b \ast a_n \sqneq_\mathcal{H} \mathfrak a$; and since  $\pi_H(\mathfrak b \ast a_n) = \allowbreak \pi_H(\mathfrak a)$, we conclude that $\mathfrak a$ is not a minimal $\preceq$-factorization either.
\end{proof}

In \cite[Proposition 2 and Corollary 1]{Ge-Le90}, Geroldinger and Lettl proved that the unions of length sets (of the divisibility premon) of a cancellative, commutative, finitely generated monoid $H$ are all finite. This result is generalized to a non-commutative, non-cancellative context by Corollary \ref{cor:unions-in-a-strongly-finite-permutable-F-space} below.

\begin{theorem}\label{thm:local-BF-ness}
Let $\mathcal{H}=(H,\preceq)$ be a premon and suppose there is a finite set $A \subseteq \mathscr I(\mathcal H)$ such that every $\preceq$-irreducible is $\preceq$-equivalent to an element of $A$. Then the minimal length sets of $\mathcal H$ are all finite.
\end{theorem}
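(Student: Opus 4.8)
The statement asserts that, under the hypothesis that every $\preceq$-irreducible is $\preceq$-equivalent to an element of a fixed finite set $A \subseteq \mathscr I(\mathcal H)$, each minimal length set $\mathsf L_{\mathcal H}^{\sf m}(x)$ is finite. The plan is to reduce everything to a statement about $A$-words and then invoke Dickson's lemma, in much the same spirit as the proof of Theorem \ref{thm:main}. First I would fix a $\preceq$-non-unit $x \in H$ (for a $\preceq$-unit the minimal length set is either empty or $\{0\}$, so there is nothing to prove) and observe that, since $\sqeq_{\mathcal H}$ identifies $\preceq$-equivalent letters, every minimal $\preceq$-factorization $\mathfrak a$ of $x$ can be replaced letter-by-letter by an $A$-word $\widetilde{\mathfrak a}$ with $\pi_H(\widetilde{\mathfrak a}) = \pi_H(\mathfrak a) = x$, the same length $\|\widetilde{\mathfrak a}\| = \|\mathfrak a\|$, and which is still $\sqeq_{\mathcal H}$-minimal among $\mathscr I(\mathcal H)$-words mapping to $x$ — indeed, if some $\mathfrak b \sqneq_{\mathcal H} \widetilde{\mathfrak a}$ had $\pi_H(\mathfrak b) = x$, then pulling $\mathfrak b$ back through the same replacement would contradict minimality of $\mathfrak a$. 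Hence it is enough to bound the lengths of the $A$-words $\mathfrak c \in \mathscr F(A)$ with $\pi_H(\mathfrak c) = x$ that are $\sqeq_H$-minimal in $\pi_H^{-1}(x) \cap \mathscr F(A)$.

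The core of the argument is then: the set $U_x := \{\, \mathfrak c \in \mathscr F(A) : \pi_H(\mathfrak c) = x \,\}$, if non-empty, has only finitely many $\sqeq_H$-minimal elements up to permutation of letters. To see this, identify each $A$-word $\mathfrak c$ with its valuation vector $(\mathsf v_1(\mathfrak c), \ldots, \mathsf v_s(\mathfrak c)) \in \mathbb N^{\times s}$ where $s := |A|$, exactly as in the proof of Theorem \ref{thm:main}. Because $H$ is commutative, $\pi_H(\mathfrak c)$ depends only on this vector, so $U_x$ corresponds to a subset $V_x \subseteq \mathbb N^{\times s}$. An $A$-word $\mathfrak c \in U_x$ is $\sqeq_H$-minimal in $U_x$ precisely when its valuation vector is $\leq_s$-minimal in $V_x$ (a permuted subword of $\mathfrak c$ in the same fiber corresponds to a smaller vector still in $V_x$, and conversely). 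By Dickson's lemma, the set of $\leq_s$-minimal elements of $V_x$ is finite; since each such vector has a well-defined coordinate sum, the set of lengths $\{ \mathsf v_1(\mathfrak c) + \cdots + \mathsf v_s(\mathfrak c) : \mathfrak c \in U_x \text{ is } \sqeq_H\text{-minimal} \}$ is finite. Combined with the reduction of the previous paragraph, this gives $|\mathsf L_{\mathcal H}^{\sf m}(x)| < \infty$, which is the claim.

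One subtlety I would be careful about is the passage between "$\sqeq_{\mathcal H}$-minimal among $\mathscr I(\mathcal H)$-words in $\pi_H^{-1}(x)$" and "$\sqeq_H$-minimal among $A$-words in $\pi_H^{-1}(x)$": a priori there could be a \emph{non-}$A$ $\mathscr I(\mathcal H)$-word strictly $\sqeq_{\mathcal H}$-below $\widetilde{\mathfrak a}$ that has no $A$-word counterpart. But this cannot lower the \emph{length}: any such word can again be converted letter-by-letter into an $A$-word of the same length lying in $\pi_H^{-1}(x)$ and still strictly $\sqeq_H$-below $\widetilde{\mathfrak a}$, because $\sqeq_{\mathcal H}$ only sees letters up to $\preceq$-equivalence and every irreducible is $\preceq$-equivalent to some element of $A$; so the two minimality notions induce the same set of admissible lengths. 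I expect this bookkeeping — making precise that the letter-by-letter substitution $a \mapsto$ (its representative in $A$) is compatible with $\sqeq_{\mathcal H}$, with $\pi_H$ (via commutativity), and with word length, in \emph{both} directions — to be the main obstacle, though none of it is deep; once it is set up cleanly, Dickson's lemma does the rest. Note that, unlike Theorem \ref{thm:main}, here we do not need the elaborate minimization over triples, since we are bounding lengths for a \emph{single} element $x$ rather than a ratio over all pairs.
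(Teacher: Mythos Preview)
Your proposal has a genuine gap, and it stems from two related mistakes.

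First, the theorem does \emph{not} assume that $H$ is commutative; a premon is an arbitrary monoid equipped with an arbitrary preorder. You invoke commutativity explicitly (``Because $H$ is commutative, $\pi_H(\mathfrak c)$ depends only on this vector''), but this hypothesis is simply not available. Indeed, the paper advertises Corollary~\ref{cor:unions-in-a-strongly-finite-permutable-F-space}, which relies on the present theorem, as a result valid in a non-commutative, non-cancellative context.

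Second, and more fundamentally, your letter-by-letter substitution $\mathfrak a \mapsto \widetilde{\mathfrak a}$ does \emph{not} preserve $\pi_H$, even when $H$ is commutative. The preorder $\preceq$ on $H$ is completely arbitrary: two $\preceq$-equivalent irreducibles $a$ and $a'$ need not be equal, associated, or related in any multiplicatively meaningful way, so in general $\pi_H(\widetilde{\mathfrak a}) \ne \pi_H(\mathfrak a)$. Thus $\widetilde{\mathfrak a}$ is typically not in $\pi_H^{-1}(x)$ at all, and the whole reduction to $A$-words in the fibre $U_x$ collapses. The ``bookkeeping'' you flag as the main obstacle is not merely tedious; it is impossible as stated.

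The paper's proof sidesteps both issues by never leaving $\mathsf Z_{\mathcal H}^{\sf m}(x)$ and never touching $\pi_H$ again after fixing $x$. One defines the valuations $\mathsf v_i(\mathfrak a)$ directly on $\mathscr I(\mathcal H)$-words by counting letters in each $\preceq$-equivalence class, and applies Dickson's lemma to the image $f(\mathsf Z_{\mathcal H}^{\sf m}(x)) \subseteq \mathbb N^{\times s}$. The crucial point is that if $\mathfrak a, \mathfrak b \in \mathsf Z_{\mathcal H}^{\sf m}(x)$ satisfy $f(\mathfrak b) <_s f(\mathfrak a)$, then $\mathfrak b \sqneq_{\mathcal H} \mathfrak a$ purely from the definition of the shuffling preorder $\sqeq_{\mathcal H}$ (which compares letters only up to $\preceq$-equivalence), and since $\mathfrak b$ is \emph{already} a factorization of $x$, this contradicts the $\sqeq_{\mathcal H}$-minimality of $\mathfrak a$. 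No substitution, no commutativity, no appeal to $\pi_H$-compatibility is needed: every minimal factorization maps to a $\leq_s$-minimal vector, and Dickson's lemma finishes. Your instinct to use Dickson's lemma on valuation vectors is correct; the error is in routing the argument through an unnecessary and untenable reduction to $A$-words in $\pi_H^{-1}(x)$.
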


\begin{proof}
%This is a special case of \cite[Theorem 4.11(ii)]{Co-Tr-21(a)}.
Denote by $\sim_\mathcal{H}$ the relation of $\preceq$-equivalence on $H$ and by $\approx_\mathcal{H}$ the relation of $\sqeq_\mathcal{H}$-equivalence on $\mathscr{F}(H)$, and  assume without loss of generality that the elements of $A$ are pairwise $\preceq$-inequivalent. 
Next, let $\{q_1, \ldots, q_s\}$ be an enumeration of the elements of $A$, where $s := |A| \in \mathbb{N}^+$; and for each $i \in \llb 1, s \rrb$, let $\mathsf{v}_i \colon \mathscr{F}(\mathscr{I}(\mathcal{H})) \to \mathbb{N}$ be the function that maps the empty word to $0$ and a non-empty $\mathscr{I}(\mathcal{H})$-word $a_1 \ast \allowbreak \cdots \ast a_n$ of length $n$ to the number of indices $j \in \llb 1, n \rrb$ such that $a_j \sim_\mathcal{H} q_i$. 

By assumption, we are given that, for every $a\in \mathscr{I}(\mathcal{H})$, there is a unique $i\in \llb 1, s \rrb$ such that $a\sim_\mathcal{H} q_i$. Moreover, $q_i \sim_\mathcal{H} q_j$, for some $i, j \in \llb 1, s \rrb$, if and only if $i = j$. It is then readily seen that

\begin{equation}\label{equ:norm-vs-valuations(1)}
\|\mathfrak a\| = \mathsf{v}_1(\mathfrak a) + \cdots + \mathsf{v}_s(\mathfrak a),
\qquad \text{for all } \mathfrak a \in \mathscr F(\mathscr I(\mathcal H)).
\end{equation}

Now, fix $x\in H$. We need to show that the minimal length set $\mathsf L_\mathcal{H}^{\sf m}(x)$ of $x$ is finite. If $\mathsf L_\mathcal{H}^{\sf m}(x)$ is empty, the conclusion is obvious. So, suppose that $x$ has at least one minimal $\preceq$-factorization, and let $\leq_s$ be the product order induced on $\mathbb{N}^{\times s}$ by the standard order on $\mathbb N$ and $f$ be the function 
\[
\mathsf{Z}_\mathcal{H}^{\sf m}(x) \to \mathbb{N}^{\times s} \colon \mathfrak a \mapsto (\mathsf v_1(\mathfrak a), \ldots, \mathsf v_s(\mathfrak a)). 
\]
Since $f(\mathsf{Z}_\mathcal{H}^{\sf m}(x))$ is a non-empty subset of $\mathbb{N}^{\times s}$, we get from  Dickson's lemma that the set $\mathcal M$ of $\leq_{s}$-min\-i\-mal elements of $f(\mathsf{Z}_\mathcal{H}^{\sf m}(x))$ is finite and non-empty. We claim that $f(\mathfrak a) \in \mathcal M$ for every $\mathfrak a \in \mathsf{Z}_\mathcal{H}^{\sf m}(x)$; note that this will finish the proof, as it implies by Eq.~\eqref{equ:norm-vs-valuations(1)} that
\[
|\mathsf L_\mathcal{H}^{\sf m}(x)| = |\{\mathsf v_1(\mathfrak a) + \cdots + \mathsf v_s(\mathfrak a) \colon \mathfrak a \in \mathsf{Z}_\mathcal{H}^{\sf m}(x)\}| = |\{k_1 + \cdots + k_s \colon (k_1, \ldots, k_s) \in \mathcal M\}| \le |\mathcal M| < \infty. 
\]
For the claim, suppose to the contrary that there exists $\mathfrak a \in \mathsf{Z}_\mathcal{H}^{\sf m}(x)$ with $f(\mathfrak a) \notin \mathcal M$. Then $f(\mathfrak b) <_s f(\mathfrak a)$ for some $\mathfrak b \in f^{-1}(\mathcal M)$, meaning that $\mathsf v_i(\mathfrak b) \le \mathsf v_i(\mathfrak a)$ for each $i \in \llb 1, s \rrb$ and at least one of these in\-e\-qual\-i\-ties is strict. By the definition of $\sqeq_\mathcal{H}$, it then follows that
\[
\mathfrak b \approx_\mathcal{H} q_1^{\mathsf v_1(\mathfrak b)} \ast \cdots \ast q_s^{\mathsf v_s(\mathfrak b)} \sqeq_\mathcal{H}  q_1^{\mathsf v_1(\mathfrak a)} \ast \cdots \ast q_s^{\mathsf v_s(\mathfrak a)} \approx_\mathcal{H} \mathfrak a,
\]
which, in turn, yields $\mathfrak b \sqneq_\mathcal{H} \mathfrak a$, because two $\mathscr{I}(\mathcal{H})$-words are $\sqeq_\mathcal{H}$-equivalent only if they have the same length and, by Eq.~\eqref{equ:norm-vs-valuations(1)} and the above, $\|\mathfrak b\| < \|\mathfrak a\|$. We have thus reached a contradiction, because $\mathfrak a$ and $\mathfrak b$ are both minimal $\preceq$-factorizations of $x$.
\end{proof}

\begin{corollary}\label{cor:unions-in-a-strongly-finite-permutable-F-space}
In a premon $\mathcal H = (H, \preceq)$ with finitely many $\preceq$-irreducibles, unions of minimal length sets are all finite.
\end{corollary}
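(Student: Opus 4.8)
The plan is to deduce the corollary from Theorem~\ref{thm:local-BF-ness} by a short counting argument. No new idea beyond what is already available is needed; in particular, neither commutativity nor Proposition~\ref{prop:bounded word length} enters, and Dickson's lemma is used only indirectly, through Theorem~\ref{thm:local-BF-ness}.

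First I would note that the hypothesis ``$\mathscr I(\mathcal H)$ is finite'' lets us apply Theorem~\ref{thm:local-BF-ness} with $A := \mathscr I(\mathcal H)$: this $A$ is a finite subset of $\mathscr I(\mathcal H)$, and every $\preceq$-irreducible is (trivially, by reflexivity of $\preceq$) $\preceq$-equivalent to an element of $A$. Theorem~\ref{thm:local-BF-ness} then yields that the minimal length set $\mathsf L_{\mathcal H}^{\sf m}(x)$ is finite for \emph{every} $x \in H$. It is worth keeping in mind here the difference between the hypothesis of the corollary, which is $|\mathscr I(\mathcal H)| < \infty$, and the (weaker) hypothesis of Theorem~\ref{thm:local-BF-ness}, which only asks that $\mathscr I(\mathcal H)$ be finite up to $\preceq$-equivalence: it is the stronger, literal finiteness of $\mathscr I(\mathcal H)$ that the next step genuinely uses.

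Next I would fix $k \in \mathbb N$ and show that $\mathscr U_k^{\sf m}(\mathcal H)$ is finite. If $x \in H \setminus \mathcal H^\times$ and $k \in \mathsf L_{\mathcal H}^{\sf m}(x)$, then $x$ has a minimal $\preceq$-factorization of length $k$, so $x = \pi_H(\mathfrak a)$ for some $\mathscr I(\mathcal H)$-word $\mathfrak a$ with $\|\mathfrak a\| = k$. Since $\mathscr I(\mathcal H)$ is finite, there are only finitely many $\mathscr I(\mathcal H)$-words of length $k$ (exactly $|\mathscr I(\mathcal H)|^{\,k}$ of them), and hence the set $X_k := \{x \in H \setminus \mathcal H^\times \colon k \in \mathsf L_{\mathcal H}^{\sf m}(x)\}$ is finite; moreover, directly from Definition~\ref{def:lengths&unions},
\[
\mathscr U_k^{\sf m}(\mathcal H) = \bigcup_{x \in X_k} \mathsf L_{\mathcal H}^{\sf m}(x),
\]
which is a union of finitely many finite sets and therefore finite. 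This would complete the argument (the degenerate cases $k = 0$, $\mathscr I(\mathcal H) = \emptyset$, or $H = \mathcal H^\times$ are all subsumed, since then $X_k = \emptyset$). The only step carrying any weight is the invocation of Theorem~\ref{thm:local-BF-ness}; the remainder is bookkeeping, and I do not expect a genuine obstacle. (One could also route through Proposition~\ref{prop:bounded word length}, handling the bounded case at once and the unbounded case by the above counting, but this detour is unnecessary.)
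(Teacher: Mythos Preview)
Your proof is correct and follows essentially the same approach as the paper's: both use the finiteness of $\mathscr I(\mathcal H)$-words of length $k$ together with Theorem~\ref{thm:local-BF-ness} to conclude. The only difference is cosmetic---the paper argues by contradiction (an infinite union would, by pigeonhole on the finitely many length-$k$ words, force some single element to have an infinite minimal length set), whereas you give the direct version, writing $\mathscr U_k^{\sf m}(\mathcal H)$ as a finite union of finite sets.
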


\begin{proof}
Assume for a contradiction that $|\mathscr U^{\mathsf{m}}_k(\mathcal{H})| = \infty$ for some $k \in \mathbb{N}$. There is then a sequence $(\mathfrak a_1, \mathfrak b_1), \allowbreak (\mathfrak a_2, \mathfrak b_2), \ldots$ of pairs of $\mathscr I(\mathcal{H})$-words such that, for each $i \in \mathbb{N}^+$, $\mathfrak a_i$ and $\mathfrak b_i$ are minimal $\preceq$-fac\-tor\-i\-za\-tions of one same element $x_i \in H\setminus \mathcal{H}^\times$ with $k = \|\mathfrak a_i\| \le \|\mathfrak b_i\| < \|\mathfrak b_{i+1}\|$. However, the set of $\mathscr I(\mathcal{H})$-words of length $k$ is finite, since the basis $\mathscr I(\mathcal{H})$ is finite. So, we can find a (strictly) increasing sequence $i_1, i_2, \ldots$ of positive in\-te\-gers with $\mathfrak a_{i_1} = \mathfrak a_{i_j}$ for all $j \in \mathbb{N}^+$, implying that the minimal length set of $x_{i_1}$ is unbounded (as it contains the increasing sequence $\|\mathfrak b_{i_1}\|, \|\mathfrak b_{i_2}\|, \ldots$) and hence contradicting Theorem \ref{thm:local-BF-ness}.
\end{proof}

The next proposition shows that, under mild conditions on the premon $\mathcal H$ (see Remark \ref{rem:examples}) and up to a suitable modification of Definitions \ref{def:elasticity} and \ref{def:lengths&unions}, the results of the present section can be extended to the case when the factors used in the factorization process are taken from an arbitrary set $A \subseteq \mathscr{I}(\mathcal{H})$.

\begin{proposition}\label{prop:reduction-to-irreds}
Let $\mathcal H = (H, \preceq)$ be a premon in which the product of any two $\preceq$-non-units is a $\preceq$-non-unit, and let $A$ be a set of $\preceq$-irreducibles. There then exists a preorder $\preceq_A$ on $H$ such that each of the following conditions is satisfied:
\begin{enumerate}[label=\textup{(\roman{*})}]
\item\label{prop:reduction-to-irreds(i)} $u \in H$ is a $\preceq_A$-non-unit if and only if $u \in \langle A \rangle_H \setminus \mathcal H^\times$.
\item\label{prop:reduction-to-irreds(ii)} $a \in H$ is a $\preceq_A$-irreducible if and only if $a \in A$.
\item\label{prop:reduction-to-irreds(iii)} $a \preceq b \preceq a$, for some $a, b \in A$, if and only if $a \preceq_A b \preceq_A a$.
\end{enumerate}
\end{proposition}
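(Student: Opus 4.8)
The plan is to construct $\preceq_A$ explicitly, by a two-tier comparison: compare elements first by a minimal $A$-word length, and break ties using the original preorder $\preceq$. Write $N := \langle A\rangle_H \setminus \mathcal H^\times$, and for $u \in \langle A\rangle_H$ let $\ell_A(u)$ be the least $\|\mathfrak q\|$ among all $A$-words $\mathfrak q$ with $\pi_H(\mathfrak q) = u$; this is well defined since $\langle A\rangle_H = \pi_H(\mathscr F(A))$. Two elementary facts will be used throughout: $\ell_A(u) \ge 1$ for every $u \in N$ (the only $A$-word of length $0$ represents $1_H$, which lies in $\langle A\rangle_H \cap \mathcal H^\times$ and hence not in $N$), and $\ell_A(u) = 1$ precisely when $u \in A$. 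Now define, for $x, y \in H$,
\[
x \preceq_A y
\iff
x \notin N
\ \text{ or }\
\bigl(x, y \in N \text{ and } (\ell_A(x) < \ell_A(y) \text{ or } (\ell_A(x) = \ell_A(y) \text{ and } x \preceq y))\bigr);
\]
in particular, $x \not\preceq_A y$ whenever $x \in N$ and $y \notin N$.

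First I would check that $\preceq_A$ is a preorder: reflexivity is clear, and for transitivity, given $x \preceq_A y \preceq_A z$, either $x \notin N$ (and the conclusion is trivial) or $x \in N$, in which case necessarily $y, z \in N$ and the claim is a routine lexicographic chase using transitivity of $<$ on $\mathbb N$ and of $\preceq$. Property (i) is read straight off the definition: $1_H \notin N$, so $1_H \preceq_A x$ for every $x$, while $x \preceq_A 1_H$ holds exactly when $x \notin N$; hence the $\preceq_A$-units are precisely the elements of $H \setminus N$, i.e.\ the $\preceq_A$-non-units are exactly those of $\langle A\rangle_H \setminus \mathcal H^\times$. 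Property (iii) is equally immediate: for $a, b \in A$ we have $\ell_A(a) = \ell_A(b) = 1$, so $a \preceq_A b \iff a \preceq b$, and therefore $a \preceq_A b \preceq_A a \iff a \preceq b \preceq a$.

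The crux is property (ii). A $\preceq_A$-irreducible is in particular a $\preceq_A$-non-unit, hence an element of $N$ by (i), so it suffices to show that every element of $A$ is $\preceq_A$-irreducible while no element of $N \setminus A$ is. Suppose first that $a \in A$ can be written as $a = xy$ with $x, y$ $\preceq_A$-non-units (so $x, y \in N$) and $x \prec_A a$, $y \prec_A a$. Since $\ell_A \ge 1$ on $N$ and $\ell_A(a) = 1$, the relation $x \preceq_A a$ forces $\ell_A(x) = 1$, i.e.\ $x \in A$ and $x \preceq a$; and $a \not\preceq_A x$ then forces $a \not\preceq x$, so $x \prec a$ in $\mathcal H$, and symmetrically $y \prec a$. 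But then $a$ would be a product of two $\preceq$-non-units each strictly $\preceq$-below it, contradicting $a \in A \subseteq \mathscr I(\mathcal H)$. Conversely, let $b \in N \setminus A$, so $\ell_A(b) \ge 2$; pick an $A$-word $\mathfrak q$ of length $\ell_A(b)$ with $\pi_H(\mathfrak q) = b$ and split it as $\mathfrak q = \mathfrak q_1 \ast \mathfrak q_2$ with $\|\mathfrak q_1\| = 1$ and $\|\mathfrak q_2\| = \ell_A(b) - 1 \ge 1$. Set $x := \pi_H(\mathfrak q_1) \in A$ and $y := \pi_H(\mathfrak q_2)$; then $b = xy$, and $\ell_A(x) = 1 < \ell_A(b)$ together with $\ell_A(y) \le \ell_A(b) - 1 < \ell_A(b)$ yields $x \prec_A b$ and $y \prec_A b$. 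What remains is that $y$ is a $\preceq_A$-non-unit, i.e.\ $y \in N$; here the standing hypothesis enters, since $y$ is a non-empty product of elements of $A$, each a $\preceq$-non-unit, and a non-empty product of $\preceq$-non-units is a $\preceq$-non-unit (by induction from the hypothesis that the product of any two is), so $y$ is a $\preceq$-non-unit and, being in $\langle A\rangle_H$, lies in $N$. Hence $b$ is not $\preceq_A$-irreducible, completing (ii).

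I expect this last point to be the only genuine obstacle: if the product of two $\preceq$-non-units could be a $\preceq$-unit, the cofactor $y$ in the splitting $b = xy$ might collapse to a $\preceq_A$-unit, and then the factorization would no longer witness reducibility of $b$ — so a hypothesis of the stated kind really is needed, and it is invoked only here. Everything else reduces to bookkeeping with the definition of $\preceq_A$ and the two facts about $\ell_A$ recorded at the start.
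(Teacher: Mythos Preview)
Your proof is correct and follows essentially the same approach as the paper's: both define a minimal-$A$-length function on $\langle A\rangle_H \setminus \mathcal H^\times$ (the paper's $\phi$, your $\ell_A$) and build $\preceq_A$ as a lexicographic refinement, comparing first by that length and then by $\preceq$, with only a cosmetic difference in how elements outside $\langle A\rangle_H \setminus \mathcal H^\times$ are handled. Your verification of (ii), in particular the check that $x \prec_A b$ and $y \prec_A b$, is if anything slightly more explicit than the paper's.
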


\begin{proof}
Set $S := \langle A \rangle_H \setminus \mathcal H^\times$ and let $\phi$ be the function $H \to \mathbb N$ that maps an element $x \in S$ to the smallest integer $n \ge 1$ such that $x \in A^n$ and an element in $H \setminus S$ to $0$. We define a binary relation $\preceq_A$ on $H$ by taking $x \preceq_A y$ if and only if one of the following conditions is satisfied:
$$
(1)\ x, y \in H \setminus S,
\quad (2)\  x, y \in S \text{ and } \phi(x) < \phi(y),
\quad\text{or}\quad (3)\ x, y \in S, \ 
\phi(x) = \phi(y), \text{ and }x \preceq y. 
$$
It is routine to check that $\preceq_A$ is a preorder, so we focus below on proving that $\preceq_A$ satisfies \ref{prop:reduction-to-irreds(i)}--\ref{prop:reduction-to-irreds(iii)}.

\vskip 0.15cm

\ref{prop:reduction-to-irreds(i)} It is clear that $u \in H$ is a $\preceq_A$-unit if and only if $u \notin S$ (by the fact that $1_H \notin S$). Consequently, an element of $H$ is a $\preceq_A$-non-unit if and only if it belongs to $S$.

\vskip 0.15cm

\ref{prop:reduction-to-irreds(ii)} Fix $a \in A$ and assume for a contradiction that $a$ is not a $\preceq_A$-irreducible. Then, $a=bc$ for some $b,c\in S$ (i.e., $\preceq_A$-non-units) such that $b\prec_A a$ and $c\prec_A a$. Since $\phi(a)=1$, it must be $\phi(b)=\phi(c)=1$ and $b\prec a$ and $c\prec a$. Since $b,c\notin \mathcal{H}^\times$, we contradict that $a$ is a $\preceq$-irreducible and we prove that every $a\in A$ is a $\preceq_A$-irreducible. On the other hand, let $a\in H$ be a $\preceq_A$-irreducible and assume for a contradiction that $a\notin A$. Since $a$ is a $\preceq_A$-non-unit, then $a\in S$ and there exist $n\ge 2$ elements $a_1,\dots, a_n\in A$ such that $a=a_1 \cdots a_n$. Thus $a=bc$, with $b:=a_1$ and $c:=a_2\cdots a_n$. But this contradicts that $a$ is a $\preceq_A$-irreducible, since $b$ and $c$ are both in $S$ by the fact that a non-empty product of $\preceq$-non-units is still a $\preceq$-non-unit.

\vskip 0.15cm

\ref{prop:reduction-to-irreds(iii)} Pick $a, b \in A$. It is clear that $\phi(a) = \phi(b) = 1$. Since $A$ is a set of $\preceq$-irreducibles and a $\preceq$-ir\-re\-duc\-i\-ble is a $\preceq$-non-unit, $a$ and $b$ are both elements of $S$. Therefore, the definition itself of the preorder $\preceq_A$ implies that $a \preceq_A b \preceq_A a$ if and only if $a \preceq b \preceq a$.
\end{proof}

\begin{remark}\label{rem:examples}
\begin{enumerate*}[label=\textup{(\arabic{*})}]
\item Let $H$ be a monoid. If $x, y\in H$ and $xy \mid_H 1_H$ (i.e., $xy$ is a $\mid_H$-unit), then also $x$ and $y$ divide $1_H$; that is, the product of any two $\mid_H$-non-units is still a $\mid_H$-non-unit and Proposition \ref{prop:reduction-to-irreds} applies to the divisibility premon $(H, \mid_H)$ of $H$. The same is true of the premons $(H, \vdash_H)$ and $(H, \dashv_H)$, where $\vdash_H$ and $\dashv_H$ are, resp., the ``divides from the left'' and the ``divides from the right'' preorders on $H$ (i.e., $x \vdash_H y$ if and only if $x \in H$ and $y \in xH$, and similarly for $\dashv_H$).
\end{enumerate*}

\vskip 0.05cm

\begin{enumerate*}[label=\textup{(\arabic{*})}, resume]
\item Following \cite[Definition 2.3]{Co-Tr-22(a)}, let a \evid{weakly positive monoid} be a premon $\mathcal H = (H, \preceq)$ such that $1_H \preceq \allowbreak x$ and $uxv \preceq x \preceq yxz$ for all $x, y, z \in H$ and $u, v \in \mathcal H^\times$. By \cite[Remark 2.4(3)]{Co-Tr-22(a)} , the product of two $\preceq$-non-units is then a $\preceq$-non-unit. That is, Proposition \ref{prop:reduction-to-irreds} applies also to weakly positive monoids.
\end{enumerate*}
\end{remark}

We conclude with a couple of examples that show how the paradigm of \emph{minimal} factorizations (as opposite to ``ordinary factorizations'') can indeed mitigate the effects of blowup phenomena that would otherwise affect the invariants considered through this section, making them lose most of their significance.

\begin{example}\label{exa:blowup}
\begin{enumerate*}[label=\textup{(\arabic{*})}, mode=unboxed]
\item Let $H$ be the multiplicative monoid of the integers modulo $p^n$,
where $p \in \mathbb{N}$ is a prime and $n$ is an integer $\ge 2$. By \cite[Example 3.4]{Co-Tr-22(a)}, $H$ is an atomic monoid and the atoms (resp., the units) of $H$ are precisely the $\mid_H$-irreducibles (resp., the $\mid_H$-units). In addition, every non-zero non-unit of $H$ has an essentially unique atomic factorization, with ``essentially unique'' meaning that any two atomic fac\-tor\-i\-za\-tions of the same element are equivalent with respect to the shuffling preorder induced by the divisibility preorder $\mid_H$. On the other hand, the residue class of $0$ modulo $p^n$ has an essentially unique minimal $\mid_H$-factorization (of length $n$), but atomic factorizations of any length $\ge n$. It follows that the (classical) elasticity of $H$ is $\infty$; the minimal elasticity is $1$; and for every $k \in \mathbb N^+$, we have
\begin{equation*}
\mathscr U_k(H) = 
\left\{
\begin{array}{ll}
\{k\} & \text{if } 1 \le k < n, \\
\llb k, \infty \rrb & \text{if }k \ge n
\end{array}
\right.
\qquad\text{and}\qquad
\mathscr U_k^{\sf m}(H) = 
\left\{
\begin{array}{ll}
\{k\} & \text{if } 1 \le k \le n, \\
\emptyset & \text{if }k > n.
\end{array}
\right.
\end{equation*}
\end{enumerate*}

\vskip 0.05cm

\begin{enumerate*}[label=\textup{(\arabic{*})}, mode=unboxed, resume]
\item Fix an integer $n \ge 2$ and let $\mathcal P_{{\rm fin}, 0}(\mathbb Z_n)$ be the \evid{reduced power monoid} of the additive group $\mathbb Z_n$ of integers modulo $n$, i.e., the (additively written) monoid obtained by endowing the subsets of $\mathbb Z_n$ containing the zero element $[0]_n \in \mathbb Z_n$ with the operation of setwise addition $(X, Y) \mapsto \{x + y \colon x \in X, \, y \in Y\}$. \\

\indent{}By \cite[Proposition 4.11(ii)]{Tr20(c)}, every $X \in \mathcal P_{{\rm fin}, 0}(\mathbb Z_n)$ factors as a product of irreducibles.
On the other hand, it is obvious that $\{[0]_n, x\}$ is an irreducible of $\mathcal P_{{\rm fin}, 0}(\mathbb Z_n)$ for every non-zero $x \in \mathbb Z_n$. Consequently, we get from the \emph{proof} of \cite[Proposition 4.12(i) and Lemma 5.5]{An-Tr18} (reworked in terms of irreducibles) that every minimal factorization in $\mathcal P_{{\rm fin}, 0}(\mathbb Z_n)$ has length smaller than $n$ and the interval $\llb 2, n-1 \rrb$ is a minimal length set of $\mathcal P_{{\rm fin}, 0}(\mathbb Z_n)$. Since $\mathbb Z_n$ is a proper idempotent of $\mathcal P_{{\rm fin}, 0}(\mathbb Z_n)$, it follows that, for every integer $k \ge 2$,
\begin{equation*}
\mathscr U_k(\mathcal P_{{\rm fin}, 0}(\mathbb Z_n)) = \mathbb N_{\ge 2}
\qquad\text{and}\qquad
\mathscr U_k^{\sf m}(\mathcal P_{{\rm fin}, 0}(\mathbb Z_n)) = 
\left\{
\begin{array}{ll}
\llb 2, n-1 \rrb & \text{if } 2 \le k < n, \\
\emptyset & \text{if }k \ge n.
\end{array}
\right.
\end{equation*}
Note also that $\mathscr U_1(\mathcal P_{{\rm fin}, 0}(\mathbb Z_n)) = \{1\}$ when $n$ is odd, since every irreducible of $H$ is then an atom \cite[Theorem 4.12]{Tr20(c)}; and $\mathscr U_1(\mathcal P_{{\rm fin}, 0}(\mathbb Z_n)) = \mathbb N^+$ when $n$ is even, because in this latter case the set $\{[0]_n, [n/2]_n\}$ is an i\-dem\-po\-tent irreducible of $\mathcal P_{{\rm fin}, 0}(\mathbb Z_n)$.
\end{enumerate*}
\end{example}

\section{An example}
\label{sec:example}

Given a set $X$ and a binary relation $R$ on the free monoid $\mathscr F(X)$, we define $R^\sharp$ as the smallest monoid congruence on $\mathscr F(X)$ containing $R$.
This means that $\mathfrak u \equiv \mathfrak v \bmod R^\sharp$ if and only if there are $\mathfrak z_0, \mathfrak z_1, \allowbreak \ldots, \mathfrak z_n \in \mathscr F(X)$ with $\mathfrak z_0 = \mathfrak u$ and $\mathfrak z_n = \mathfrak v$ such that, for each $i \in \llb 0, n-1 \rrb$, there exist $X$-words $\mathfrak p_i$, $\mathfrak q_i$, $\mathfrak q_i^\prime$, and $\mathfrak r_i$ with the following properties:
\begin{center}
(i) either $\mathfrak q_i = \mathfrak q_i^\prime$, or $\mathfrak q_i \RR \mathfrak q_i^\prime$, or $\mathfrak q_i^\prime \RR \mathfrak q_i$; \hskip 1cm (ii) $\mathfrak z_i = \mathfrak p_i \ast \mathfrak q_i \ast \mathfrak r_i$ and $\mathfrak z_{i+1} = \mathfrak p_i \ast \mathfrak q_i^\prime \ast \mathfrak r_i$.
\end{center}
We denote by $\mathrm{Mon}\langle X \mid R \rangle$ the monoid obtained by taking the quotient of $\mathscr F(X)$ by the congruence $R^\sharp$;
we write $\mathrm{Mon}\langle X \mid R \rangle$ multiplicatively and call it a (\evid{monoid}) \evid{presentation}. We refer to the elements of $X$ as the \evid{generators} of the presentation, to each pair $(\mathfrak q, \mathfrak q^\prime) \in R$ as a \evid{defining relation}, and to each $X$-word in a defining relation as a \evid{defining word}. If there is no danger of confusion, we systematically identify an $X$-word $\mathfrak z$ with its equivalence class in $\mathrm{Mon}\langle X \mid R \rangle$.

The \evid{left graph} of a presentation $\mathrm{Mon}\langle X \mid R\rangle$ is the undirected multigraph with vertex set $X$ and an edge from $y$ to $z$ for each pair $(y \ast \mathfrak u, z \ast \mathfrak v) \in R$ with $y, z \in X$ and
$\mathfrak u, \mathfrak v \in \mathscr F(X)$; this results in a loop when $y = z$, and in multiple (or parallel) edges between $y$ and $z$ if there are two or more defining relations of the form $(y \ast \mathfrak u, z \ast \mathfrak v)$.
The \evid{right graph} is defined analogously, using the
right-most (instead of left-most) let\-ters of each word from a defining relation. The left and the right graphs of a presentation were first considered by Adian \cite{Ad66}, whence we refer to them as the \evid{Adian graphs} of $\mathrm{Mon}\langle X \mid R\rangle$. 

On the other hand, a \evid{piece} of a presentation $\mathrm{Mon}\langle X \mid R\rangle$ is a non-empty $X$-word $\mathfrak u$ for which there exist
$\mathfrak p, \mathfrak p^\prime, \mathfrak q, \mathfrak q^\prime \in \mathscr F(X)$ with $\mathfrak p \ne \mathfrak p^\prime$ or 
$\mathfrak q \ne \mathfrak q^\prime$ such that $
\mathfrak v := \mathfrak p \ast \mathfrak u \ast \mathfrak q$ and $\mathfrak v' := \mathfrak p^\prime \ast \mathfrak u \ast \mathfrak q^\prime$ are defining words; in particular, it is not required that $\mathfrak v \ne \mathfrak v'$ or $(\mathfrak v, \mathfrak v') \in R$. The notion was first conceived in the study of \emph{group} presentations and later extended by Kashintsev to semigroups (see \cite[Section 1]{Ka92}).

Following \cite{Ka92}, we say that a monoid $H$ is \evid{of class $K_p^q$}, for some $p, q \in \mathbb{N}^+$, if $H$ is isomorphic to a monoid presentation $\textup{Mon} \langle X \mid R \rangle$ with finitely many generators such that (i) no defining word can be expressed as the concatenation in $\mathscr F(X)$ of less than $p$ pieces and (ii) the Adian graphs of the presentation have both girth $\ge q$ (we recall that the \evid{girth} of an undirected multigraph $G$ is the shortest length of a cycle in $G$, with the understanding that the girth of a cycle-free multigraph is $\infty$). 
Our interest in these definitions is linked to the following result, first proved by Guba in \cite[Theorem 1]{Gu94b} and hence referred to as \emph{Guba's} (\emph{embedding}) \emph{theorem} (note that, in \cite{Gu94b}, there is a typo in the very def\-i\-ni\-tion of a piece, as discussed on MathOverflow at \url{https://mathoverflow.net/questions/353340/}).

\begin{theorem}\label{thm:guba-embedding-thm}
Every monoid of class $K_3^2$ embeds into a group \textup{(}and hence is cancellative\textup{)}.
\end{theorem}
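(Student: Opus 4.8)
This is a known, deep result; the plan is to follow Guba's original argument in \cite[Theorem 1]{Gu94b}, whose architecture I outline here. Write $H \cong \mathrm{Mon}\langle X \mid R\rangle$ with $X$ finite and the two defining conditions of class $K_3^2$ in force, and let $G$ be the group defined by the same presentation. It suffices to prove that the canonical monoid homomorphism $H \to G$ is injective: cancellativity of $H$ is then automatic, since every submonoid of a group is cancellative.

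The natural framework is that of \emph{semigroup} (Squier, or derivation) diagrams. An equality $\mathfrak u = \mathfrak v$ in $H$, for $X$-words $\mathfrak u$ and $\mathfrak v$, is encoded by a planar diagram tiled by cells, each labeled by a defining relation $(\mathfrak q, \mathfrak q') \in R$, whose top boundary spells $\mathfrak u$ and whose bottom boundary spells $\mathfrak v$, the interior recording at which positions and in which order the relations were applied. The two $K_3^2$ hypotheses translate into geometric control on such diagrams: the piece condition with $p = 3$ is a $C(3)$-type small-cancellation bound limiting how much boundary two adjacent cells can share (no defining word is a concatenation of fewer than three pieces), while the requirement that both Adian graphs have girth $\ge 2$ forbids the degenerate branchings at a vertex where a single letter $y \in X$ simultaneously begins (resp.\ ends) two defining words.

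First I would recall Adian's embedding theorem \cite{Ad66}: if both Adian graphs of a monoid presentation are \emph{acyclic}, the monoid embeds in a group; this is the limiting case. The content of Guba's theorem is to upgrade ``acyclic'' to merely ``girth $\ge 2$'' at the cost of the $C(3)$ hypothesis, and this I would carry out by a minimal-diagram argument. Assuming $\mathfrak a \ne \mathfrak b$ in $H$ but $\mathfrak a = \mathfrak b$ in $G$, choose a van Kampen diagram over $G$ of least area witnessing the latter equality; the $C(3)$ condition forces every interior cell of a minimal such diagram to carry a long arc on the diagram's outer boundary, and one then pushes that arc outward, using the girth-$\ge 2$ conditions on \emph{both} Adian graphs to exclude the branchings that would obstruct the move, thereby transforming the group diagram into a genuine one-sided semigroup derivation --- i.e., an equality in $H$ --- and contradicting $\mathfrak a \ne \mathfrak b$. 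This is the monoid analogue of Dehn's algorithm and Greendlinger's lemma, executed without the convenience of freely cancelling inverses.

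That caveat pinpoints the main obstacle: in a group one cancels $xx^{-1}$ at will, whereas a monoid admits only one-directional rewriting, so the diagram surgery is considerably more delicate, and it is precisely the \emph{joint} use of both Adian graphs having girth $\ge 2$ together with the $C(3)$ piece bound that rules out every obstructing configuration. Arranging all this combinatorial bookkeeping to interlock is the technical core of \cite[Theorem 1]{Gu94b}, which we invoke as stated.
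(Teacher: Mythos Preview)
The paper does not prove this theorem at all: it is stated as a known result with a citation to \cite[Theorem 1]{Gu94b} and then immediately applied in Example~\ref{exa: non-comm counterexample}. Your proposal likewise ends by invoking Guba's theorem as stated, so it is consistent with the paper's treatment; the expository sketch of Guba's argument that you add is extra context, not a different proof strategy.
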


Guba's theorem will come in handy in the next example, which shows that Theorem \ref{thm:main} does not carry over to the non-commutative setting in any obvious way. 

\begin{example}\label{exa: non-comm counterexample}

Let $H$ be the presentation $\textup{Mon} \langle A \mid R \rangle$, where $A$ is the $3$-element set $\{a, b, c\}$ and $R$ is the set $\{(\mathfrak s_n, \mathfrak t_n) \colon n = 2, 3, \ldots\} \subseteq \mathscr F(A) \times \mathscr F(A)$, with
\[
\mathfrak s_n := c \ast a^{\ast n} \ast b^{\ast 2^n} \ast a^{\ast n} \ast c 
\quad\textup{and}\quad
\mathfrak t_n := a \ast c^{\ast n} \ast b^{\ast n} \ast c^{\ast n} \ast a.
\]
It is routine to check that $H$ is a $3$-generated monoid with trivial group of units whose $\mid_H$-irreducibles are $\underline{a}$, $\underline{b}$, and $\underline{c}$, where we write $\underline{\mathfrak u}$ for the $R^\sharp$-congruence class (in $H$) of an $A$-word $\mathfrak u$. Each of $\underline{a}$, $\underline{b}$, and $\underline{c}$ is on the other hand an atom, because the defining words in $R$ have all length $\ge 2$. Then, $H$ is an atomic monoid and every $\mid_H$-factorization is an atomic factorization and vice versa.
In addition, it is immediate that, for each $k \in \mathbb{N}^+$, none of the $A$-words
\begin{equation}\label{equ:pieces}
c \ast a^{\ast k} \ast b, 
\quad
b \ast a^{\ast k} \ast c,
\quad
a \ast c^{\ast k} \ast b,
\quad\textup{or}\quad
b \ast c^{\ast k} \ast a
\end{equation}
is a piece of $\textup{Mon} \langle A \mid R \rangle$; if, e.g., a defining word factors as $\mathfrak p \ast (a \ast c^{\ast k} \ast b) \ast \mathfrak q$ for some $A$-words $\mathfrak p$ and $\mathfrak q$, then necessarily $\mathfrak p = \varepsilon_A$ and $\mathfrak q = b^{\ast (k-1)} \ast c^{\ast k} \ast a$ (and the other cases are similar).
Since a non-empty $A$-word $\mathfrak s := x_1 \ast \cdots \ast x_l$ is a piece of $\textup{Mon} \langle A \mid R \rangle$ only if so is any subword of the form $x_i \ast \cdots \ast x_j$ with $1 \le i \le j \le l$, it follows that the support $\{x_1, \ldots, x_l\}$ of $\mathfrak s$ is a \emph{proper} subset of $A$. As a matter of fact, a piece is in the first place a subword of a defining word, so that the shortest pieces with support $A$ (if there were any) would be those listed in Eq.~\eqref{equ:pieces}.

It follows that no defining word of $\textup{Mon} \langle A \mid R \rangle$ is the concatenation in $\mathscr F(A)$ of less than three pieces (note that each defining word is palindromic and its support is $A$). Consequently, $H$ is a monoid of class $K_3^2$, which implies, by Guba's theorem, that $H$ is cancellative.
So, it remains to see that, for each $n \in \mathbb{N}^+$, there are minimal atomic factorizations $\mathfrak a_n$ and $\mathfrak b_n$ of an element $x_n \in H$ such that~$\|\mathfrak a_n\| \ge n\, \|\mathfrak b_n\|$.

For, fix an integer $n \ge 2$. The $H$-words $
\mathfrak a_n :=  \underline{c} \ast \underline{a}^{\ast n} \ast \underline{b}^{\ast 2^n} \ast \underline{a}^{\ast n} \ast \underline{c}$ and $\mathfrak b_n := \underline{a} \ast \underline{c}^{\ast n} \ast \underline{b}^{\ast n} \ast \underline{c}^{\ast n} \ast \underline{a}$
are non-empty atomic factorizations of the same element,
with the further property that $\|\mathfrak b_n\|^{-1} \|\mathfrak a_n\| \to \allowbreak \infty$ as $n \to \infty$.
Therefore, it suffices to check that $\mathfrak a_n$ and $\mathfrak b_n$ are minimal atomic factorizations.

Assume to the contrary that $\mathfrak a_n$ is not a minimal atomic factorization (the minimality of $\mathfrak b_n$ can be proved in a similar fashion). Then $\mathfrak s_n \equiv \allowbreak \mathfrak u \bmod R^\sharp$ for some permutation $\mathfrak u$ of a \emph{proper} subword of $\mathfrak s_n$; that is, there exist a smallest $k \in \mathbb N^+$ and $A$-words $\mathfrak z_0, \mathfrak z_1, \allowbreak \ldots, \mathfrak z_k$ with $\mathfrak z_0 = \allowbreak \mathfrak s_n$ and $\mathfrak z_k = \mathfrak u$ such that, for each $i \in \llb 0, k-1 \rrb$, there are an integer $m_i \ge 2$ and $\mathfrak p_i, \mathfrak r_i \in \mathscr F(A)$ with 
\begin{center}
(i) $\mathfrak z_i = \mathfrak p_i \ast \mathfrak s_{m_i} \ast \mathfrak r_i$ and $\mathfrak z_{i+1} = \mathfrak p_i \ast \mathfrak t_{m_i} \ast \mathfrak r_i$, \hskip 0.5cm \text{or} \hskip 0.5cm (ii) $\mathfrak z_i = \mathfrak p_i \ast \mathfrak t_{m_i} \ast \mathfrak r_i$ and $\mathfrak z_{i+1} = \mathfrak p_i \ast \mathfrak s_{m_i} \ast \mathfrak r_i$.
\end{center}
However, it is readily seen (by induction on $i$) that this can only happen if $\mathfrak z_i \in \{\mathfrak s_n, \mathfrak t_n\}$ for every $i \in \llb 0, k \rrb$, because, for an integer $m \ge 2$ with $m \ne n$, neither $\mathfrak s_m$ nor $\mathfrak t_m$ is a divisor of $\mathfrak s_n$ (resp., of $\mathfrak t_n$) in $\mathscr F(A)$. It follows that $\mathfrak u = \mathfrak t_n$, which is impossible since neither $\mathfrak s_n$ nor $\mathfrak t_n$ is a proper subword of $\mathfrak s_n$.
\end{example}

\section{Prospects for future research}
\label{sec:closings}

By Theorem \ref{thm:main-premon}, the minimal elasticity $\rho^{\rm m}(H)$ of (the divisibility premon of) a cancellative, commutative, reduced monoid $H$ with finitely many irreducibles is finite. In view of \cite[Proposition 3.4(1)]{FGKT}, we are led to ask if $\rho^{\rm m}(H)$ is, in fact, a rational number (cf.~the last lines of Sect.~\ref{sec:elasticity}).

On the other hand, we know from Sect.~\ref{sec:example} that the minimal elasticity of an atomic, cancellative, finitely generated, reduced monoid need not be finite. Is this still the case with an \evid{acyclic}, finitely generated, and reduced monoid? Here, we say a monoid $H$ is \evid{acyclic} if $uxv \ne x$ for all $u, v, x \in H$ such that $u$ or $v$ is a non-unit. Acyclic monoids were introduced in \cite[Definition 4.2]{Tr20(c)} and further studied in \cite{Co-Tr-22(a)}; from an arithmetical point of view, they provide a better alternative to cancellativity in the non-commutative setting. For instance, we get from \cite[Corollary 4.4]{Tr20(c)} that, in an acyclic monoid, irreducibles and atoms are the same thing, which generalizes an observation made in the first lines of Sect.~\ref{sec:intro}. It is also worth noting that every acyclic monoid is unit-cancellative; the two notions (of acyclicity and unit-cancellativity) coincide in the commutative setting, but not in general \cite[Example 4.8]{Tr20(c)}. 

\section*{Acknowledgments}
L.\,C.~was supported by the European Union's Horizon 2020 research and innovation programme under the Marie Sk\l{}odowska-Curie grant agreement No.~101021791, and by the FWF (Austrian Science Fund) project No.~P33499-N. The Marie Sk\l{}odowska-Curie grant also financed S.\,T.'s visit at University of Graz in summer-fall 2022, during which the paper was written. The authors are both members of the Gruppo Nazionale per le Strutture Algebriche, Geometriche e le loro Applicazioni (GNSAGA) of the Italian Mathematics Research Institute (INdAM).

\nocite{*}

\end{document}